\documentclass[12pt]{article}
\usepackage{amsmath,amsfonts,amssymb,amsthm,latexsym,pb-diagram}
\usepackage{enumerate}

\pagestyle{myheadings}

\newtheorem{thm}{Theorem}
\newtheorem{defn}[thm]{Definition}
\newtheorem{lem}[thm]{Lemma}
\newtheorem{cor}[thm]{Corollary}

\setlength{\voffset}{-1.0cm}
\setlength\textwidth{6in}
\setlength\textheight{8.7 in}
\setlength{\hoffset}{-0.5cm}
\begin{document}


\title{A group commutator involving the last distance matrix and dual distance matrix of a $Q$-polynomial distance-regular graph}
\author{Siwaporn  Mamart\\ \begin{small}{Department of Mathematics, Silpakorn University, Nakhon Pathom, Thailand}\end{small} \\
\begin{small}{mamart\_s@silpakorn.edu}\end{small}}
\date{}
\maketitle

\begin{abstract}
Let $\Gamma$ denote the  Hamming graph $H(D,r)$ with $r \geq 3$. 
Consider the  distance matrices $\{A_i\}_{i=0}^{D}$ of $\Gamma$. 
Fix a vertex $x$ of $\Gamma$, and consider the dual distance matrices  $\{A_i^{*}\}_{i=0}^{D}$  of $\Gamma$ with respect to $x$.
We investigate the  group commutator $A_{D}^{-1}A_{D}^{*-1}A_{D}A_{D}^{*}$.
We show that this matrix is diagonalizable.
We compute its eigenvalues and their eigenspaces.
Let $T$ denote the subconstituent algebra of $\Gamma$ with respect to $x$.
We describe the action of $A_{D}^{-1}A_{D}^{*-1}A_{D}A_{D}^{*}$ on each  irreducible $T$-module. 
\end{abstract}

\section{Introduction}

\hskip 19pt  Let $\Gamma$ denote a $Q$-polynomial distance-regular graph with diameter $D$ and distance matrices $\{A_i\}_{i=0}^{D}$. 
Fix a vertex $x$ of $\Gamma$, and let $\{A_i^{*}\}_{i=0}^{D}$ denote the dual distance matrices of $\Gamma$ with respect to $x$ (formal definitions will begin in Section 2).
It is known that  $A_{D}$ and  $A_{D}^{*}$ are invertible \cite[Theorem 6.6]{NT}. 
Motivated by this, we consider the group commutator $A_{D}^{-1}A_{D}^{*-1}A_{D}A_{D}^{*}$.
It is natural to ask, is this matrix diagonalizable, and if so, what are its eigenvalues and their eigenspaces.
In this paper we investigate this question.
For the sake of concreteness, we assume that $\Gamma$ is the  Hamming graph $H(D,r)$ with $r \geq 3$. 
Our results are summarized as follows.
We show that the product $A_{D}^{-1}A_{D}^{*-1}A_{D}A_{D}^{*}$ is diagonalizable with eigenvalues 
\begin{displaymath} (1-r)^s  \qquad -D \leq s \leq D. \end{displaymath}
For each eigenvalue $(1-r)^s$, we describe the corresponding eigenspace in terms of the split decomposition (see \cite{split}).
We show that the dimension of this eigenspace is
\begin{displaymath}  \displaystyle   \sum_{\substack{0 \leq i,j \leq D\\ i+j \geq D \\ j-i =s}}\binom{D}{i}\binom{i}{D-j}(r-2)^{i+j-D}. \end{displaymath}
It is known that the matrices $\{A_i\}_{i=0}^{D}$ form a  basis for the Bose-Mesner algebra $M$ of $\Gamma$. 
Similarly, the matrices $\{A_i^{*}\}_{i=0}^{D}$ form a  basis for the dual Bose-Mesner algebra $M^*$ of $\Gamma$ with respect to $x$.
Recall that $M$ and $M^*$ generate the subconstituent algebra $T$  of $\Gamma$ with respect to $x$.
Let $W$ denote an irreducible $T$-module.
We consider the action of $A_{D}^{-1}A_{D}^{*-1}A_{D}A_{D}^{*}$ on $W$.
We show that this action is diagonalizable with eigenvalues
\begin{center}
 $(1-r)^s$ \qquad $-d \leq s \leq d$, \quad $d-s$ is even,
\end{center}
where $d$ is the diameter of $W$.
For this action, we show that each eigenspace has dimension $1$.

This paper is organized as follows.
In Section 2, we discuss some basic facts about  $Q$-polynomial distance-regular graphs.
In Section 3, we give a detailed description of the subconstituent algebra $T$  for the complete graph $K_r$.
In Section 4, we focus on the  Hamming graph $H(D,r)$; this section contains our main results.

\section{Preliminaries}

\hskip 19pt Let $\mathbb{C}$ denote the complex number field.
Let $X$ denote a nonempty finite set. 
Let $Mat_{X}(\mathbb{C})$ denote the $\mathbb{C}$-algebra consisting of all matrices whose rows and columns are indexed by $X$ and whose entries are in $\mathbb{C}$.
Let $V = \mathbb{C}^{X}$ denote the vector space over $\mathbb{C}$ consisting of column vectors whose coordinates are indexed by $X$ and whose entries are in $\mathbb{C}$.
We observe that $Mat_{X}(\mathbb{C})$ acts on $V$ by left multiplication. 
We call $V$ the {\it standard module}.
We endow $V$ with the Hermitean inner product $\langle \; ,\,\rangle$ that satisfies  $\langle u,v\rangle=u^t\overline{v}$ for all $u,v\in{V}$. Here $t$ denotes the transpose, and  $\rule[0.3cm]{0.3cm}{0.1pt}$  denotes complex conjugation.
For each $y \in X$, let $\hat{y}$ denote the element of $V$ with a one in the $y$ coordinate and zero in all other coordinates.
We  observe that $\{ \hat{y} | y \in X \}$ is an orthonormal basis for $V$.
Let  $\mathbf{1}$ denote the all $1$'s vector in $V$.
Observe that $\mathbf{1} = \sum_{y \in X} \hat{y}$.

Throughout the paper, $\Gamma$ denotes a finite, undirected, connected graph with vertex set $X$ and path length distance function $\partial$.
Define $D = \max\{ \partial(x,y) | x,y \in X \}$. We call $D$ the {\it  diameter} of $\Gamma$.
%
Let $k$ denote a non-negative integer. 
Then $\Gamma$ is said to be {\it regular} with valency $k$ whenever every vertex of $\Gamma$ is adjacent to exactly $k$
distinct vertices of $\Gamma$. 
%
%
We say that $\Gamma$ is {\it distance-regular} whenever for all integers $h, i, j$ $(0 \leq h, i, j \leq D)$
and for all $x, y \in X$ with $\partial(x, y) = h$, the number
$p^h_{ij}:=|\{z \in X | \partial(x, z) = i  \ \text{and}\  \partial(y, z) = j \}|$
is independent of $x$ and  $y$.
The constants $p^h_{ij}$ $(0 \leq h, i, j \leq D)$ are called the {\it intersection numbers} of $\Gamma$.
For the rest of this paper, assume that $\Gamma$ is  distance-regular with $D \geq 1$.
Observe that $\Gamma$ is regular with valency $k=p^0_{11}$.
For each integer $i$ $(0 \leq i \leq D)$, let $A_i$ denote the matrix in $Mat_{X}(\mathbb{C})$ with $(x,y)$-entry
\begin{displaymath}
\displaystyle  (A_i)_{xy}
= \left\{ \begin{array}{ll}
1 & \textrm{if $\partial(x,y)=i$} \\[2pt]
0 & \textrm{if $\partial(x,y)\neq i$}
\end{array} \right.
\quad (x,y \in X).
\end{displaymath}
We call $A_i$ the  {$i$th} {\it  distance matrix} of $\Gamma$. 
For convenience, define $A_i = 0$ if $i < 0$ or $i > D$.
We abbreviate $A = A_1$, and call this the {\it adjacency matrix} of $\Gamma$.
Observe 
$$\begin{array}{rcl}
 A_0 &=&  I,\\ [2pt]
\displaystyle  \sum_{i=0}^{D} A_i &=&  J \qquad \qquad \; \text{($J$ = all $1$'s matrix)},\\ [10pt]
\overline{A_i} &=& A_i  \qquad \qquad(0 \leq i \leq D),\\
A_{i}^t &=& A_i \qquad \qquad (0 \leq i \leq D),\\
A_i A_j &=& \displaystyle  \sum_{h=0}^{D} p^h_{ij} A_h \; \quad (0 \leq i,j  \leq D).
\end{array}$$
Let $M$ denote the subalgebra of $Mat_{X}(\mathbb{C})$ generated by $A$. 
We call $M$ the {\it Bose-Mesner algebra} of $\Gamma$.
The matrices $A_0, A_1, \ldots  , A_D$ form a basis for $M$.
By \cite[p. 45]{BCN}, $M$ has a second basis $E_0, E_1, \ldots , E_D$ such that
$$\begin{array}{rcl}
 E_0 &=& |X|^{-1}J, \\ [2pt]
\displaystyle  \sum_{i=0}^{D} E_i &=&  I, \\ [10pt]
\overline{E_i} &=& E_i  \qquad \qquad(0 \leq i \leq D),\\
E_{i}^t &=& E_i \qquad \qquad (0 \leq i \leq D),\\
E_i E_j &=&  \delta_{ij} E_i \qquad \quad (0 \leq i,j  \leq D).
\end{array}$$
We call $E_0, E_1, \ldots , E_D$ the {\it primitive idempotents} of $\Gamma$.\\
Let $\circ$ denote the entry-wise product in $Mat_{X}(\mathbb{C})$. 
Observe 
\begin{center} $A_i \circ A_j = \delta_{ij} A_i \qquad \quad (0 \leq i,j  \leq D)$. \end{center}
Consequently, $M$ is closed under $\circ$. 
Therefore, there exist complex scalars $q^h_{i j}$ such that
\begin{center} $E_i \circ E_j = \displaystyle  |X|^{-1} \sum_{h=0}^{D} q^h_{ij} E_h \qquad \quad (0 \leq i,j  \leq D)$. \end{center}
By \cite[p. 170]{BIGGS}, the scalar $q^h_{i j}$ is real and nonnegative for $0 \leq h, i,j  \leq D$.
The $q^h_{i j}$ are called the {\it Krein parameters} of $\Gamma$.
The graph $\Gamma$ is said to be {\it $Q$-polynomial} with respect to the ordering $E_0, E_1, \ldots , E_D$ whenever the following hold for $0 \leq h, i, j \leq D$:
\begin{enumerate}
\item[\rm(i)] $q^h_{i j}=0$  if one of $h, i, j$ is greater than the sum of the other two;
\item[\rm(ii)] $q^h_{i j} \neq 0$  if one of $h, i, j$ equals the sum of the other two.
\end{enumerate}

For the rest of this paper, assume that $\Gamma$ is $Q$-polynomial with respect to  $E_0, E_1, \ldots , E_D$.
For the rest of this section, fix $x \in X$. 
For each integer $ i$ $(0 \leq i \leq D)$, let $E_i^{*}=E_i^{*}(x)$ denote the diagonal matrix in $Mat_{X}(\mathbb{C})$
with $(y,y)$-entry
\begin{displaymath}
\displaystyle  (E_i^{*})_{yy}
= \left\{ \begin{array}{ll}
1 & \textrm{if $\partial(x,y)=i$} \\[2pt]
0 & \textrm{if $\partial(x,y)\neq i$}
\end{array} \right.
\quad (y \in X).
\end{displaymath}
We call $E_i^{*}$ the {$i$th} {\it dual idempotent} of $\Gamma$ with respect to $x$.
For convenience, define $E_i^{*} = 0$ if $i < 0$ or $i > D$. Observe
$$\begin{array}{rcl}
\displaystyle  \sum_{i=0}^{D} E_i^{*}  &=&  I, \\ [15pt]
\overline{E_i^{*} } &=& E_i^{*}   \qquad \qquad(0 \leq i \leq D),\\
E_i^{*t}&=& E_i^{*}  \qquad \qquad (0 \leq i \leq D),\\
E_i^{*} E_j^{*} &=&  \delta_{ij} E_i^{*}  \qquad \quad (0 \leq i,j  \leq D).
\end{array}$$
The matrices $E_0^{*}, E_1^{*}, \ldots , E_D^{*}$ form a basis for a commutative subalgebra $M^*= M^*(x)$ of $Mat_{X}(\mathbb{C})$, called the {\it dual Bose-Mesner algebra of $\Gamma$ with respect to $x$} \cite[p. 378]{1paper}. 
We now give another basis for $M^*$. 
For each integer $ i$ $ (0 \leq i \leq D)$, let $A_i^{*}= A_i^{*}(x)$ denote the diagonal matrix in  $Mat_{X}(\mathbb{C})$  with $(y,y)$-entry
\begin{center} $(A_i^{*})_{yy} = |X| (E_i)_{xy} \qquad (y \in X)$. \end{center}
We call $A_i^{*}$ the {$i$th} {\it  dual distance matrix} of $\Gamma$ with respect to $x$ \cite[p. 379]{1paper}.
We abbreviate $A^* = A_1^*$, and call this the {\it dual adjacency matrix} of $\Gamma$ with respect to x.
The matrices $A_0^{*}, A_1^{*}, \ldots , A_D^{*}$ form a  basis for  $M^*$ such that
$$\begin{array}{rcl}
 A_0^{*} &=&  I, \\ [2pt]
\displaystyle  \sum_{i=0}^{D} A_i^{*} &=&  |X| E_0^{*},\\ [15pt]
\overline{A_i^{*}} &=& A_i^{*}  \qquad \qquad(0 \leq i \leq D),\\
A_{i}^{*t} &=& A_i^{*} \qquad \qquad (0 \leq i \leq D),\\ 
A_i^{*} A_j^{*} &=& \displaystyle  \sum_{h=0}^{D} q^h_{ij} A_h^{*} \; \quad (0 \leq i,j  \leq D).
\end{array}$$
Observe that the algebra $M^*$ is  generated by  $A^*$  \cite[Lemma 3.11]{1paper}.

\begin{defn} {\rm \cite[Definition 3.3]{1paper} }
{\rm Let $T=T(x)$ denote the subalgebra of $Mat_{X}(\mathbb{C})$  generated by $M$ and $M^*$.
We call $T$ the {\it subconstituent algebra (or Terwilliger algebra)} of $\Gamma$  with respect to $x$.}
\end{defn}

We observe that $T$ is generated by $A$ and $A^*$. Moreover $T$ has finite dimension.
By \cite[Lemma 3.4]{1paper}, the algebra $T$ is semisimple.

\begin{defn}
{\rm An element $C \in T$ is called {\it central} whenever $CB = BC$ for all $B \in T$. 
Define $Z(T) = \{C \in T | \text{$C$ is central}\}$. We call $Z(T)$ the {\it center} of $T$.}
\end{defn}

\begin{defn}
{\rm By a {\it $T$-module} we mean a subspace $W \subseteq V$ such that $BW \subseteq W$ for all $B \in T$.}
\end{defn}

Let $W$ denote a $T$-module. 
Then $W$ is said to be {\it irreducible} whenever $W$ is nonzero, and $W$ contains no $T$-modules other than $0$ and $W$.
By \cite[Corollary 6.2]{JGo} any $T$-module is an orthogonal direct sum of irreducible $T$-modules. In particular the standard module $V$ is an orthogonal direct sum of irreducible $T$-module. 
By \cite[Lemma 3.3]{Curtin}, any two non-isomorphic irreducible $T$-modules are orthogonal.

Let $W$ denote an irreducible $T$-module.
By the {\it endpoint} of $W$ we mean $\min\{i|0 \leq i \leq D, E_i^{*}W \neq 0\}$.
By the {\it diameter} of $W$ we mean $|\{i|0 \leq i \leq D, E_i^{*}W \neq 0\}|-1$.
By the {\it dual endpoint} of $W$ we mean $\min\{i|0 \leq i \leq D, E_iW \neq 0\}$.
By the {\it dual diameter} of $W$ we mean $|\{i|0 \leq i \leq D, E_iW \neq 0\}|-1$.
The diameter of $W$ is equal to the dual diameter of $W$ \cite[Corollary 3.3]{APas1}.
The $T$-module $W$ is {\it thin} whenever $\dim(E_i^{*}W) \leq 1$ for $0 \leq i \leq D$.
In this case $\dim(E_i W) \leq 1$ for $0 \leq i \leq D$ \cite[Lemma 3.9]{1paper}.
By the {\it displacement} of $W$ we mean the integer $\rho + \tau + d - D$, where $\rho, \tau, d$ denote the endpoint, dual endpoint, and diameter of $W$, respectively.
By \cite[Theorem 8.3, Theorem 8.4]{Egge} there is a unique irreducible $T$-module $W$ with $E_0^{*}W \neq 0$ and $E_0 W \neq 0$, called the {\it primary} $T$-module.
The primary $T$-module has basis $A_0 \hat{x}, \ldots, A_D \hat{x}$ \cite[Lemma 3.6]{1paper}.
The primary $T$-module is thin \cite[Theorem 8.4]{Egge}.

By \cite[Theorem 6.6]{NT} $A_{D}$ and  $A_{D}^{*}$ are invertible. 
Motivated by this, we consider the group commutator $A_{D}^{-1}A_{D}^{*-1}A_{D}A_{D}^{*}$.
For the sake of concreteness, we focus on the  Hamming graph $H(D,r)$ with $r \geq 3$. 
Before we discuss $H(D,r)$, it is useful to discuss the complete graph $K_r$.
We do this in the next section.

\section{The complete graph $K_r$}
\hskip 19 pt From now on, fix an integer $r \geq 3$ and assume that $|X|=r$.

\begin{defn} \label{DefKr}
{\rm The {\it complete graph} $K_r$ has vertex set $X$, and any two distinct vertices are adjacent.}
\end{defn}

For the rest of this section, assume that $\Gamma$ is the complete graph $K_r$.
We have $E_0 = r^{-1}J$, $E_1 = I - E_0 = I - r^{-1}J$  and $A= J-I = (r-1)E_0 - E_1$.
For the rest of this section, fix  $x \in X$ and let $T=T(x)$ be the corresponding subconstituent algebra.
We have $A^* = (r-1)E_0^{*}- E_1^{*}$.

We have a comment about writing matrices in  $Mat_{X}(\mathbb{C})$. When we list the elements of $X$  we list $x$ first.
We have 

\begin{equation}
E_0^{*} = 
\begin{bmatrix}
   1 & 0 & 0 & \cdots & 0 \\
    0 & 0 & 0 & \cdots & 0 \\
 0 & 0 & 0 & \cdots & 0 \\
 \vdots & \vdots & \vdots & \ddots & \vdots \\
  0 & 0 & 0 & \cdots & 0 
\end{bmatrix}, \qquad
E_1^{*} = 
\begin{bmatrix}
   0 & 0 & 0 & \cdots & 0 \\
   0  & 1 & 0 & \cdots & 0 \\
 0 & 0 & 1 & \cdots & 0 \\
 \vdots & \vdots & \vdots & \ddots & \vdots \\
  0 & 0 & 0 & \cdots & 1 
\end{bmatrix}.
\label{eq:E0*}
\end{equation}

\begin{lem} \label{genT}
For the graph $K_r$, the matrices $E_0$ and $E_0^{*}$ generate $T$.
\end{lem}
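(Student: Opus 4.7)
The plan is to establish the two inclusions between $T$ and the (unital) subalgebra $T'$ of $Mat_X(\mathbb{C})$ generated by $E_0$ and $E_0^{*}$. The inclusion $T' \subseteq T$ is immediate, since $E_0 \in M \subseteq T$ and $E_0^{*} \in M^{*} \subseteq T$.

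For the reverse inclusion $T \subseteq T'$, I would invoke the observation made just after the definition of $T$ that $T$ is generated by the adjacency matrix $A$ and the dual adjacency matrix $A^{*}$. It therefore suffices to show that both $A$ and $A^{*}$ lie in $T'$. Using the relations already recorded for $K_r$, namely $E_0 = r^{-1} J$ and $E_1^{*} = I - E_0^{*}$, a short calculation gives
\begin{align*}
A   &= J - I = r E_0 - I, \\
A^{*} &= (r-1) E_0^{*} - E_1^{*} = r E_0^{*} - I.
\end{align*}
Each right-hand side is a polynomial expression in $E_0$ and $E_0^{*}$ (with the identity supplied by the unital structure of $T'$), so $A, A^{*} \in T'$, which gives $T \subseteq T'$.

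The only point requiring a brief comment is the convention that ``generated by'' is understood to yield a unital subalgebra of the unital algebra $Mat_X(\mathbb{C})$; without this, the subalgebra generated by $E_0$ alone would collapse to $\mathbb{C} E_0$ because $E_0^2 = E_0$, and similarly for $E_0^{*}$. Once this convention is in place, there is no real obstacle: the argument is a two-line linear-algebraic reduction to the two displayed identities above, both of which are direct consequences of facts already established in the preceding paragraph of the paper. No structural results about irreducible $T$-modules, dimension counts, or products of the form $E_0^{*} E_0 E_0^{*}$ are needed.
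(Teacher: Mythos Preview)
Your proof is correct and follows essentially the same route as the paper: both reduce to the identities $A = rE_0 - I$ and $A^{*} = rE_0^{*} - I$, using that $T$ is generated by $A$ and $A^{*}$. Your explicit remark about the unital convention is a welcome clarification, but otherwise there is nothing to add.
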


\begin{proof}
The algebra $T$ is generated by $A$ and $A^*$. Also $A= r E_0 - I$ and $A^* = r E_0^* - I$.
\end{proof}

\begin{lem} \label{3terms}
For the graph $K_r$, the following hold:
\begin{enumerate}
\item[\rm(i)] $r E_0 E^{*}_0 E_0 =  E_0$.
\item[\rm(ii)] $rE^{*}_0 E_0 E^{*}_0 = E^{*}_0$.
\end{enumerate}
\end{lem}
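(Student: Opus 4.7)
The plan is to verify both identities by direct matrix multiplication, using the explicit forms of $E_0$ and $E_0^*$ already recorded in the section. We have $E_0 = r^{-1} J$ by the opening paragraph of Section 3, and from display (\ref{eq:E0*}) the matrix $E_0^*$ is the rank-one matrix whose only nonzero entry is a $1$ in position $(x,x)$; in other words $E_0^* = \hat{x}\hat{x}^t$, where we use the convention (stated just before (\ref{eq:E0*})) that $x$ is listed first.

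For part (i), I would write $E_0 E_0^* E_0 = r^{-2}\, J \hat{x}\hat{x}^t J$ and then use the elementary identities $J\hat{x} = \mathbf{1}$ and $\hat{x}^t J = \mathbf{1}^t$, which hold because every column (respectively row) of $J$ equals $\mathbf{1}$ (respectively $\mathbf{1}^t$). This collapses the product to $r^{-2}\, \mathbf{1}\mathbf{1}^t = r^{-2} J = r^{-1} E_0$. Multiplying through by $r$ gives (i). For part (ii), I would similarly write $E_0^* E_0 E_0^* = r^{-1}\, \hat{x}\hat{x}^t J \hat{x}\hat{x}^t$ and then extract the scalar $\hat{x}^t J \hat{x} = 1$ from the middle, leaving $r^{-1}\, \hat{x}\hat{x}^t = r^{-1} E_0^*$. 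Multiplying through by $r$ gives (ii).

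There is essentially no hidden difficulty here. Both identities reflect the general fact that for rank-one orthogonal projections $P$ and $Q$ onto unit vectors $u$ and $v$ one has $PQP = |\langle u,v\rangle|^2\, P$, and in our situation the unit vectors are $u = r^{-1/2}\mathbf{1}$ and $v = \hat{x}$, so $|\langle u,v\rangle|^2 = r^{-1}$, which accounts for the factor of $r$ on the left-hand sides. The only subtlety is notational: recognizing from (\ref{eq:E0*}) that $E_0^* = \hat{x}\hat{x}^t$, after which the calculation is two lines.
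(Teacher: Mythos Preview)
Your proof is correct and follows exactly the approach indicated in the paper, which simply says ``By matrix multiplication using (\ref{eq:E0*}) and $E_0 = r^{-1}J$.'' You have merely spelled out the two-line computation (and added a nice conceptual remark about rank-one projections) that the paper leaves implicit.
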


\begin{proof}
By matrix multiplication using (\ref{eq:E0*}) and $E_0 = r^{-1}J$.
\end{proof}

Note that for the complete graph $K_r$, the primary $T$-module has dimension 2. 
Up to isomorphism, there exists a unique non-primary irreducible $T$-module, and this has dimension 1.
In particular, every irreducible $T$-module is thin.

\begin{lem} \label{Tiso}
For the graph $K_r$, the algebra $T$ is isomorphic to $Mat_{2}(\mathbb{C}) \oplus \mathbb{C}$.
\end{lem}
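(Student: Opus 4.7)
The plan is to apply the Artin--Wedderburn theorem. Since $T$ is semisimple (\cite[Lemma~3.4]{1paper}) and defined over $\mathbb{C}$, we have $T \cong \bigoplus_i \mathrm{Mat}_{n_i}(\mathbb{C})$, where $n_i$ runs over the dimensions of the distinct irreducible $T$-modules up to isomorphism. Thus the entire task reduces to identifying these modules and reading off their dimensions.

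The paragraph preceding the lemma records exactly the required information: for $K_r$ there are two irreducible $T$-modules up to isomorphism, namely the primary module, of dimension $2$, and a unique non-primary module, of dimension $1$. Feeding these dimensions into Wedderburn gives
\[
T \;\cong\; \mathrm{Mat}_2(\mathbb{C}) \oplus \mathrm{Mat}_1(\mathbb{C}) \;=\; \mathrm{Mat}_2(\mathbb{C}) \oplus \mathbb{C},
\]
which is the claim.

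As a consistency check I would verify $\dim T = 5$ directly. By Lemma~\ref{genT}, $T$ is generated by $E_0$ and $E_0^*$, and by Lemma~\ref{3terms} any alternating product in these two idempotents of length at least $3$ collapses to a shorter one (for instance $E_0 E_0^* E_0 E_0^* = r^{-1} E_0 E_0^*$). Hence $T$ is spanned by
\[
\{\,I,\;E_0,\;E_0^*,\;E_0 E_0^*,\;E_0^* E_0\,\},
\]
so $\dim T \le 5 = \dim(\mathrm{Mat}_2(\mathbb{C}) \oplus \mathbb{C})$, in agreement with the Wedderburn count. There is essentially no obstacle here beyond invoking the module classification; the only mildly delicate point is matching multiplicities in the decomposition $V = (\text{primary}) \oplus (r-2)\cdot(\text{$1$-dimensional module})$, which is forced by $\dim V = r$ together with the primary module accounting for dimension~$2$.
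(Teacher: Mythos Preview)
Your proof is correct and is essentially identical to the paper's: both invoke semisimplicity together with the classification of irreducible $T$-modules (primary of dimension~$2$, one non-primary class of dimension~$1$) and read off the Wedderburn decomposition. Your added consistency check that $\dim T \le 5$ is exactly what the paper establishes separately in the next lemma, so it is fine but not needed here.
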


\begin{proof}
By the comment below Lemma \ref{3terms}, and since $T$ is semisimple. 
\end{proof}

\begin{lem} \label{baseT}
For the graph $K_r$,  the following hold:
\begin{enumerate}
\item[\rm(i)] $\dim(T) =5$. 
\item[\rm(ii)] $I, E_0, E^{*}_0, E_0 E^{*}_0, E^{*}_0 E_0$ form a basis for $T$. 
\end{enumerate}
\end{lem}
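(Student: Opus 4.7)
Part (i) is essentially a corollary of Lemma \ref{Tiso}: since $T \cong \mathrm{Mat}_2(\mathbb{C}) \oplus \mathbb{C}$, we get $\dim(T) = 4 + 1 = 5$. So the plan is to state (i) immediately from Lemma \ref{Tiso} and devote the main effort to (ii).

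For part (ii), my plan is to show that the five listed matrices span $T$ and then show they are linearly independent. For the spanning part, by Lemma \ref{genT} the algebra $T$ is generated by $E_0$ and $E_0^*$, so every element of $T$ is a polynomial in these two matrices. Since $E_0^2 = E_0$ and $E_0^{*2} = E_0^*$, every monomial reduces to an alternating word in $E_0$ and $E_0^*$. The alternating words of length at most $2$ give exactly $I, E_0, E_0^*, E_0 E_0^*, E_0^* E_0$. For length $\geq 3$, I would use Lemma \ref{3terms}: the identities $r E_0 E_0^* E_0 = E_0$ and $r E_0^* E_0 E_0^* = E_0^*$ let me reduce any longer alternating word by a factor of $r^{-1}$ back to a shorter one. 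So the span is contained in the span of the five listed matrices.

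For linear independence, the cleanest route is direct computation via the explicit matrix forms in (\ref{eq:E0*}) together with $E_0 = r^{-1}J$. A quick calculation shows
\begin{itemize}
\item[] $(E_0 E_0^*)_{yz} = r^{-1}$ if $z = x$ and $0$ otherwise;
\item[] $(E_0^* E_0)_{yz} = r^{-1}$ if $y = x$ and $0$ otherwise.
\end{itemize}
Then I suppose $aI + bE_0 + cE_0^* + dE_0 E_0^* + eE_0^* E_0 = 0$ and read off entries at strategically chosen positions: off-diagonal entries $(y,z)$ with $y,z \neq x$ and $y \neq z$ force $b = 0$; an off-diagonal entry $(y,x)$ with $y \neq x$ then forces $d = 0$; an entry $(x,z)$ with $z \neq x$ forces $e = 0$; a diagonal entry $(y,y)$ with $y \neq x$ forces $a = 0$; and the $(x,x)$-entry then forces $c = 0$.

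The only mild obstacle is bookkeeping in the linear independence step (making sure the chosen entries cleanly decouple the five unknowns, which requires $r \geq 3$ so that at least two vertices other than $x$ exist, though the given hypothesis $r \geq 3$ covers this comfortably). Everything else is a direct consequence of Lemmas \ref{genT}, \ref{3terms}, and \ref{Tiso}.
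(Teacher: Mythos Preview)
Your proof is correct. The paper's argument for (i) is identical to yours, and for the spanning part of (ii) it likewise invokes Lemmas~\ref{genT} and~\ref{3terms} to reduce every word in $E_0, E_0^*$ to one of the five listed matrices. The only difference is in how linear independence is obtained: the paper does not compute entries at all, but simply cites (i). Since the five matrices span $T$ and $\dim(T)=5$, they must be a basis. You set up this shortcut yourself but then did not use it; your explicit entry-by-entry verification is correct (and indeed needs $r\geq 3$ for two vertices distinct from $x$ to exist), but it is redundant once (i) is in hand.
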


\begin{proof}
${\rm(i)}$  By Lemma \ref{Tiso}. \\ 
${\rm(ii)}$  By Lemma \ref{genT},  Lemma \ref{3terms} and (i) above.
\end{proof}

\begin{defn}
{\rm Define $e_0 \in Mat_X(\mathbb{C})$ to be the projection onto the primary $T$-module. Define $e_1 \in Mat_X(\mathbb{C})$ to be the projection onto the span of the non-primary irreducible $T$-modules.
}
\end{defn}

Observe that 
\begin{align} \label{e0e1} V=e_0V + e_1 V  \quad \text{(orthogonal direct sum)}. \end{align} 

\begin{lem}  \label{basee0V}
For the graph $K_r$, the following hold:
\begin{enumerate}
\item[\rm(i)] $E_0 V =  \mathbb{C} \mathbf{1}$ and $E^{*}_0 V =  \mathbb{C} \hat{x}$.
\item[\rm(ii)] the vectors $\hat{x}, \mathbf{1}$ form a basis for $e_0V$.
\item[\rm(iii)] $e_0 V = E_0 V + E^{*}_0 V$ (direct sum).
\end{enumerate}
\end{lem}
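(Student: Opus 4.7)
The plan is to handle (i) by direct computation from the explicit forms of $E_0$ and $E_0^{*}$, then use (i) together with the cited description of the primary $T$-module to obtain (ii), and finally assemble (iii) as a dimension count.

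For part (i), I would start from $E_0 = r^{-1}J$ so that for every $v\in V$, $E_0 v = r^{-1}(\mathbf{1}^{t}v)\,\mathbf{1}\in \mathbb{C}\mathbf{1}$, while $E_0\mathbf{1} = \mathbf{1}$ shows the inclusion is an equality. Similarly, by (\ref{eq:E0*}), $E_0^{*}$ is the diagonal matrix with a $1$ in the $(x,x)$-entry and zeros elsewhere, so $E_0^{*}v = v_x\,\hat{x}\in \mathbb{C}\hat{x}$, and $E_0^{*}\hat{x}=\hat{x}$ gives the reverse inclusion.

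For part (ii), I would invoke the fact cited just before Definition~\ref{DefKr} that the primary $T$-module has basis $A_0\hat{x}, A_1\hat{x},\ldots, A_D\hat{x}$. Here $\Gamma = K_r$ has $D=1$, $A_0 = I$, and $A_1 = A = J-I$, so this basis is $\hat{x}$ and $\mathbf{1}-\hat{x}$. Since $\mathbf{1}$ and $\hat{x}$ span the same $2$-dimensional subspace as $\hat{x}$ and $\mathbf{1}-\hat{x}$, they also form a basis of $e_0V$ (using that by definition $e_0V$ is the primary $T$-module).

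For part (iii), part (i) gives $E_0V+E_0^{*}V = \mathbb{C}\mathbf{1}+\mathbb{C}\hat{x}$. Because $r\geq 3$, the vectors $\mathbf{1}$ and $\hat{x}$ are linearly independent (any scalar relation $\alpha\mathbf{1}+\beta\hat{x}=0$ forces $\alpha=0$ from a non-$x$ coordinate and then $\beta=0$), so the sum is direct and $2$-dimensional. By (ii) this subspace coincides with $e_0V$, giving $e_0V = E_0V\oplus E_0^{*}V$. No step here is a real obstacle; the only point requiring a small argument is the linear independence of $\mathbf{1}$ and $\hat{x}$, which is where the hypothesis $r\geq 3$ (or even $r\geq 2$) enters.
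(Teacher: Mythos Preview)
Your proposal is correct and follows essentially the same route as the paper: part (i) is by direct matrix computation, part (ii) uses that the primary $T$-module is two-dimensional (you via the basis $A_0\hat{x}, A_1\hat{x}$, the paper via the stated dimension) together with $\hat{x},\mathbf{1}$ lying in it, and part (iii) is assembled from (i) and (ii) by a dimension count. The only difference is that you spell out the linear independence of $\hat{x}$ and $\mathbf{1}$ explicitly, which the paper leaves implicit.
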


\begin{proof}
${\rm(i)}$   By matrix multiplication.\\
${\rm(ii)}$  Observe that $\hat{x}, \mathbf{1}$  are linearly independent and contained in the primary $T$-module.
Since the primary $T$-module has dimension 2, the result follows.\\
${\rm(iii)}$ By (i) and (ii).
\end{proof}

\begin{lem} \label{matrepAA*}
For the graph $K_r$, with respect to the basis $\hat{x}, \mathbf{1}$  the matrices representing $A, A^*$ are
 \[  A : 
\begin{bmatrix}
   -1 & 0  \\
    1 & r-1
\end{bmatrix},  \qquad 
A^{*} : 
\begin{bmatrix}
   r-1 & r  \\
   0  & -1
\end{bmatrix}.\]
\end{lem}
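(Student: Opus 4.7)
The plan is to carry out direct matrix-vector calculations using the formulas $A = J - I$ and $A^{*} = rE_0^{*} - I$ (the latter from $A^{*} = (r-1)E_0^{*} - E_1^{*}$ together with $E_0^{*}+E_1^{*}=I$). Since $\hat{x},\mathbf{1}$ is a basis for $e_0V$ by Lemma~\ref{basee0V}(ii), and since $e_0V$ is $T$-invariant (as it is a $T$-module), it suffices to expand $A\hat{x}, A\mathbf{1}, A^{*}\hat{x}, A^{*}\mathbf{1}$ as $\mathbb{C}$-linear combinations of $\hat{x}$ and $\mathbf{1}$; the coefficients are then read off as the columns of the claimed matrices.

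The first step is to record the two elementary identities I will use repeatedly: $J\hat{x} = \mathbf{1}$ and $J\mathbf{1} = r\mathbf{1}$, together with $E_0^{*}\hat{x} = \hat{x}$ and $E_0^{*}\mathbf{1} = \hat{x}$. The identities for $E_0^{*}$ follow from its description in (\ref{eq:E0*}) as the diagonal matrix with a single $1$ in the $(x,x)$-entry. Next I compute
\[
A\hat{x} = (J-I)\hat{x} = \mathbf{1} - \hat{x}, \qquad A\mathbf{1} = (J-I)\mathbf{1} = (r-1)\mathbf{1},
\]
which yields the first and second columns of the stated matrix for $A$. Then I compute
\[
A^{*}\hat{x} = (rE_0^{*}-I)\hat{x} = r\hat{x} - \hat{x} = (r-1)\hat{x},
\]
\[
A^{*}\mathbf{1} = (rE_0^{*}-I)\mathbf{1} = r\hat{x} - \mathbf{1},
\]
which yields the two columns of the stated matrix for $A^{*}$.

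There is no substantive obstacle: the calculation is mechanical once one has explicit expressions for $A$, $A^{*}$, and $E_0^{*}$, all of which are available from the preceding discussion of $K_r$. The only minor point to verify is that the images indeed lie in $\mathrm{Span}\{\hat{x},\mathbf{1}\}$, and this is automatic because $e_0V$ is $T$-invariant and both $A$ and $A^{*}$ lie in $T$.
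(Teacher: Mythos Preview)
Your proof is correct and follows essentially the same approach as the paper, which simply says ``By matrix multiplication using $A = J-I$ and $A^{*} = (r-1)E_0^{*} - E_1^{*}$.'' You have merely spelled out the four matrix-vector products explicitly, which is exactly what the paper intends.
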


\begin{proof}
By matrix multiplication using  $A= J-I$ and $A^* = (r-1)E_0^{*}- E_1^{*}$.
\end{proof}

\begin{lem} \label{e1V}
For the graph $K_r$,  the following hold:
\begin{enumerate}
\item[\rm(i)] $\dim(e_1 V) =r-2$.
\item[\rm(ii)] $e_1V = span\{\hat{y}-\hat{z} \;|\; y,z \in $X$ \;\text{and}\; x,y,z \;\text{are mutually distinct}\}$.
\end{enumerate}
\end{lem}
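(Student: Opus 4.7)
The plan is to combine dimension counting with the orthogonal decomposition \eqref{e0e1}.

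For part (i), I would observe that $\dim V = |X| = r$, while Lemma \ref{basee0V}(ii) gives $\dim(e_0 V) = 2$. Since \eqref{e0e1} expresses $V$ as the orthogonal direct sum $e_0 V + e_1 V$, we get $\dim(e_1 V) = r - 2$.

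For part (ii), let $W$ denote the span in the statement. First I would show $W \subseteq e_1 V$. By Lemma \ref{basee0V}(ii), the vectors $\hat{x}, \mathbf{1}$ form a basis of $e_0 V$, so by \eqref{e0e1} it suffices to check that every spanning vector $\hat{y} - \hat{z}$ (with $x,y,z$ mutually distinct) is orthogonal to both $\hat{x}$ and $\mathbf{1}$. This is immediate from the definition of the Hermitian inner product: $\langle \hat{y} - \hat{z}, \hat{x}\rangle = 0$ since $y,z \neq x$, and $\langle \hat{y} - \hat{z}, \mathbf{1}\rangle = 1 - 1 = 0$.

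To obtain the reverse inclusion, I would show $\dim W \geq r-2$, which combined with (i) forces $W = e_1 V$. To do so, fix any vertex $y_0 \in X \setminus \{x\}$ and consider the $r-2$ vectors $\{\hat{y} - \hat{y_0} \mid y \in X \setminus \{x, y_0\}\}$. These are elements of $W$ by definition, and they are linearly independent because their nonzero entries occupy distinct coordinates (the coordinate $y$ appears only in the $y$th vector). Hence $\dim W \geq r-2$, completing the proof.

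There is no real obstacle here; the content is just bookkeeping between the orthogonal complement of $e_0V$ and an explicit spanning set. The only point to be careful about is keeping the assumption $r \geq 3$ in mind, since otherwise there would be no pair $y,z$ with $x,y,z$ mutually distinct and the statement would be vacuous.
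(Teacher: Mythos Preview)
Your proof is correct and follows essentially the same route as the paper: use the orthogonal decomposition \eqref{e0e1} together with Lemma~\ref{basee0V} for the dimension count, check orthogonality against $\hat{x}$ and $\mathbf{1}$ for the inclusion, and exhibit $r-2$ linearly independent differences anchored at a fixed vertex for equality. One small wording quibble: the vectors $\hat{y}-\hat{y_0}$ do \emph{not} have nonzero entries in distinct coordinates (they all share the $y_0$ coordinate), but your parenthetical observation that the $y$-coordinate is nonzero only in the $y$th vector is the correct and sufficient justification.
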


\begin{proof}
${\rm(i)}$ View $e_1V = (e_0V)^\perp$. By Lemma \ref{basee0V}, $\dim(e_1 V) =r-2$.\\
${\rm(ii)}$ Define $S = span\{\hat{y}-\hat{z} | y,z \in X \;\text{and}\; x,y,z \;\text{are mutually distinct}\}$.
We will show that $S = e_1V$.
First we show that  $S \subseteq e_1V$.
Let $y, z \in X$  such that $x, y, z$ are mutually distinct. 
Then $\langle \hat{y} - \hat{z}, \hat{x}\rangle = 0$ and $\langle \hat{y} - \hat{z}, \mathbf{1}\rangle = 0$. 
The vectors $\hat{x}$ and $\mathbf{1}$ span $e_0V$ by Lemma \ref{basee0V}, so $\langle \hat{y} - \hat{z}, e_0V \rangle =0$.
Now $ \hat{y}-\hat{z} \in e_1 V$ by (\ref{e0e1}).
So $S \subseteq e_1V$.
To show equality, we show that $S$ has dimension $r-2$. 
To do this, we display $r-2$ linearly independent vectors in $S$.
Fix $y \in X$ with $y \neq x$. There exist $r-2$ vertices $z \in X$ such that $x, y, z$ are mutually distinct.
Observe that $\{\hat{y}-\hat{z} | z \in X \;\text{and}\; x,y,z \;\text{are mutually distinct}\}$ are linearly independent vectors in $S$.
Therefore $\dim(S) =r-2$.
By these comments, $S = e_1V$.
\end{proof}

%
%
%
%

\begin{lem}
For the graph $K_r$, the following hold:
\begin{enumerate}
\item[\rm(i)] $e_0^2 = e_0$.
\item[\rm(ii)] $e_1^2 =e_1$.
\item[\rm(iii)] $e_0 e_1 = e_1 e_0 = 0$.
\item[\rm(vi)]$e_0 + e_1 = I$.
\end{enumerate}
\end{lem}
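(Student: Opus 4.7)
The plan is to deduce all four statements directly from the fact, recorded in (\ref{e0e1}), that $V = e_0 V + e_1 V$ is an orthogonal direct sum, together with the defining property that $e_0$ and $e_1$ are the orthogonal projections onto $e_0 V$ and $e_1 V$ respectively. No graph-theoretic computation is required beyond what is already in place.

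First I would address (i) and (ii). An orthogonal projection $e$ onto a subspace $W$ satisfies $ev \in W$ for all $v \in V$, and acts as the identity on $W$; therefore $e(ev) = ev$ for all $v$, giving $e^2 = e$. Applying this with $e = e_0$ and $e = e_1$ yields (i) and (ii).

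Next I would prove (iv). By (\ref{e0e1}), each $v \in V$ admits a unique decomposition $v = u_0 + u_1$ with $u_0 \in e_0 V$ and $u_1 \in e_1 V$; by the definition of orthogonal projection, this decomposition is realized by $u_0 = e_0 v$ and $u_1 = e_1 v$. Hence $(e_0 + e_1)v = e_0 v + e_1 v = v$ for all $v \in V$, so $e_0 + e_1 = I$.

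Finally, for (iii), the orthogonality of the sum in (\ref{e0e1}) means $e_0 V \perp e_1 V$. For any $v \in V$ we have $e_1 v \in e_1 V$, so $e_1 v$ is orthogonal to $e_0 V$; consequently its orthogonal projection onto $e_0 V$ vanishes, giving $e_0 e_1 v = 0$. Thus $e_0 e_1 = 0$, and by the symmetric argument $e_1 e_0 = 0$. The only potential subtlety is making sure the decomposition in (\ref{e0e1}) really is \emph{orthogonal} rather than merely a direct sum, but this is explicitly asserted there, so there is no real obstacle.
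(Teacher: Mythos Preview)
Your proof is correct and is exactly what the paper intends: its own proof reads simply ``By construction of $e_0$ and $e_1$,'' and you have merely unpacked that phrase by appealing to the definition of $e_0,e_1$ as projections together with the orthogonal decomposition (\ref{e0e1}). There is no difference in approach.
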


\begin{proof}
By construction of $e_0$ and $e_1$.
\end{proof}

\begin{lem} \label{prop e0}
For the graph $K_r$, the following hold:
\begin{enumerate}
\item[\rm(i)] \[ e_0 = 
\begin{bmatrix}
   1      & 0 & 0 & \cdots & 0 \\
    0       & \frac{1}{r-1} & \frac{1}{r-1} & \cdots & \frac{1}{r-1} \\
 \vdots & \vdots & \vdots & \ddots & \vdots \\
  0       & \frac{1}{r-1} & \frac{1}{r-1} & \cdots & \frac{1}{r-1} 
\end{bmatrix}.
\]
\item[\rm(ii)] $\overline{e_0} = e_0$.
\item[\rm(iii)] $e_0^t = e_0$.
\item[\rm(iv)] $e_0 = \frac{n}{n-1}(E_0+E^*_0 - E_0 E^{*}_0 - E^{*}_0 E_0)$.
\item[\rm(v)] $e_0, e_1 \in T$.
\end{enumerate}
\end{lem}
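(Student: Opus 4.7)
The plan is to prove (i) by writing down an orthogonal basis for the primary $T$-module and reading off the associated projection matrix, then read (ii) and (iii) from the resulting explicit form, then verify (iv) by direct matrix arithmetic, and finally conclude (v) from (iv).

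For (i), I would start from Lemma \ref{basee0V}(ii), which says $\hat x, \mathbf 1$ span $e_0V$. Since $\langle \hat x, \mathbf 1 - \hat x\rangle = 0$ and $\|\mathbf 1 - \hat x\|^2 = r-1$, the pair $\hat x$, $\mathbf 1 - \hat x$ is an orthogonal basis for $e_0V$. The orthogonal projection onto $e_0V$ is therefore
\[
e_0 \;=\; \hat x\,\hat x^{\,t} + \frac{1}{r-1}(\mathbf 1 - \hat x)(\mathbf 1 - \hat x)^{t}.
\]
Listing $x$ first when indexing rows and columns (as in the comment preceding (\ref{eq:E0*})), this formula gives exactly the block matrix in the statement: a $1$ in the $(x,x)$-entry, zeros on the rest of the first row and column, and $\frac{1}{r-1}$ on the remaining $(r-1)\times(r-1)$ block. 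Parts (ii) and (iii) are then immediate since the displayed matrix is real and symmetric.

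For (iv), the idea is to compute each of $E_0$, $E_0^*$, $E_0E_0^*$, $E_0^*E_0$ entrywise. Using $E_0 = r^{-1}J$ and $E_0^* = \hat x\hat x^{\,t}$, the products $E_0 E_0^*$ and $E_0^* E_0$ have a single nonzero column (resp.\ row), namely column $x$ (resp.\ row $x$) with all entries $\tfrac1r$. Adding and subtracting, the combination $E_0 + E_0^* - E_0E_0^* - E_0^* E_0$ has $(x,x)$-entry $\frac{r-1}{r}$, is zero elsewhere on row $x$ and column $x$, and equals $\tfrac1r$ on all other entries. Scaling by $\frac{r}{r-1}$ reproduces the matrix from (i); this also flags that the $n$ in the statement of (iv) should read $r$. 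Part (v) is then a one-liner: since $E_0\in M\subseteq T$ and $E_0^*\in M^*\subseteq T$, the identity in (iv) shows $e_0\in T$, and hence $e_1 = I - e_0 \in T$.

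The only place where anything could go wrong is the bookkeeping in (iv), where the cross terms $E_0E_0^*$ and $E_0^*E_0$ must cancel the first-row and first-column contributions of $E_0$ precisely, leaving a matrix proportional to the one in (i); this is a finite verification over four entry-types and presents no conceptual difficulty.
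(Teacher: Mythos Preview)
Your argument is correct and amounts to a careful unpacking of what the paper records only as ``by construction'' for (i)--(iv); for (v) the paper invokes Lemma~\ref{baseT} where you use the containment $E_0,E_0^*\in T$ directly, but this is the same idea. Your observation that the $n$ in (iv) is a typo for $r$ is also correct.
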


\begin{proof}
{\rm(i)}--{\rm(iv)} By construction.\\
${\rm(v)}$ By ${\rm(iv)}$ and Lemma \ref{baseT}, $e_0 \in T$. Since $e_0 + e_1 = I$, $e_1 \in T$.
\end{proof}

\begin{lem} \label{commute}
For the graph $K_r$, the following hold:
\begin{enumerate}
\item[\rm(i)] $e_0 E^{*}_0 =  E^{*}_0 e_0 =  E^{*}_0$.
\item[\rm(ii)] $e_0 E_0 =  E_0 e_0 =  E_0$.
\end{enumerate}
\end{lem}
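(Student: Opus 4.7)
The plan is to exploit the fact that $e_0$ is the orthogonal projection onto the primary $T$-module, so $e_0$ acts as the identity on every vector lying in $e_0V$. By Lemma \ref{basee0V}(iii) we have $E_0 V \subseteq e_0 V$ and $E_{0}^{*} V \subseteq e_0 V$. Consequently, for every $v \in V$ the vectors $E_0 v$ and $E_{0}^{*} v$ lie in $e_0 V$, so $e_0(E_0 v) = E_0 v$ and $e_0(E_{0}^{*} v) = E_{0}^{*} v$. This immediately gives the two identities $e_0 E_0 = E_0$ and $e_0 E_{0}^{*} = E_{0}^{*}$.

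For the reversed products, I would invoke self-adjointness. The matrices $e_0, E_0, E_{0}^{*}$ are all real and symmetric: this is Lemma \ref{prop e0}(ii),(iii) for $e_0$, and it is among the basic properties of $E_0$ and $E_{0}^{*}$ recorded in Section 2. Taking the transpose of the identities already established yields $E_0 e_0 = (e_0 E_0)^t = E_0^t = E_0$ and $E_{0}^{*} e_0 = (e_0 E_{0}^{*})^t = E_{0}^{*t} = E_{0}^{*}$, which completes both parts.

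There is no real obstacle here; the lemma is an immediate consequence of Lemma \ref{basee0V}(iii) together with the idempotent/self-adjoint nature of the projection $e_0$. If one preferred to avoid the projection viewpoint entirely, one could carry out a direct matrix computation using the explicit forms $E_0 = r^{-1}J$, equation (\ref{eq:E0*}) for $E_{0}^{*}$, and the matrix for $e_0$ given in Lemma \ref{prop e0}(i); this is routine but less conceptual than the argument above.
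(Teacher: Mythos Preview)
Your proof is correct, but it takes a different route from the paper. The paper actually uses the direct matrix computation you mention at the end as an alternative: part (i) is proved by multiplying the explicit matrix for $e_0$ given in Lemma~\ref{prop e0}(i) against the matrix for $E_0^{*}$ in (\ref{eq:E0*}), and part (ii) is declared similar. Your primary argument is more conceptual: you use Lemma~\ref{basee0V}(iii) to see that $E_0V$ and $E_0^{*}V$ lie inside the range of the projection $e_0$, which forces $e_0E_0=E_0$ and $e_0E_0^{*}=E_0^{*}$, and then you obtain the reversed products by transposing, invoking the real symmetry of $e_0$ from Lemma~\ref{prop e0}(ii),(iii). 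Your approach explains \emph{why} the identities hold and would transfer to settings where explicit matrices are unavailable; the paper's approach is shorter here simply because all the matrices have already been written down.
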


\begin{proof}
${\rm(i)}$  By Lemma \ref{prop e0}(i) and (\ref{eq:E0*}).\\
${\rm(ii)}$  Similar to (i).
\end{proof}

\begin{cor}
For the graph $K_r$, the elements $e_0, e_1$  form a basis for $Z(T)$.
\end{cor}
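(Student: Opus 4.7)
The plan is to combine two facts: first, that $e_0$ and $e_1$ lie in $Z(T)$ because they are projections onto $T$-invariant subspaces, and second, that $\dim Z(T) = 2$ by the isomorphism from Lemma \ref{Tiso}, so any two linearly independent central elements must form a basis.

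First I would verify centrality. By construction $V = e_0 V \oplus e_1 V$ is an orthogonal direct sum of $T$-modules (the primary $T$-module and the sum of non-primary irreducibles). For any $B \in T$, both $e_0 V$ and $e_1 V$ are preserved by $B$. Given $v \in V$, write $v = e_0 v + e_1 v$. Then $B e_0 v \in e_0 V$ and $B e_1 v \in e_1 V$, so $e_0 B v = e_0(B e_0 v) + e_0(B e_1 v) = B e_0 v$, which gives $e_0 B = B e_0$. The same argument shows $e_1 B = B e_1$. Thus $e_0, e_1 \in Z(T)$, and combined with Lemma \ref{prop e0}(v) we already know $e_0, e_1 \in T$.

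Next I would establish linear independence. Since $e_0$ and $e_1$ are nonzero orthogonal idempotents (the primary and non-primary parts are both nonzero, as $e_0 V$ has dimension $2$ and $e_1 V$ has dimension $r-2 \geq 1$), any relation $\alpha e_0 + \beta e_1 = 0$ multiplied on the right by $e_0$ yields $\alpha e_0 = 0$, hence $\alpha = 0$, and similarly $\beta = 0$.

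Finally I would invoke Lemma \ref{Tiso} to compute $\dim Z(T) = 2$. Indeed, the center of $Mat_2(\mathbb{C}) \oplus \mathbb{C}$ is $\mathbb{C} I_2 \oplus \mathbb{C}$, which is two-dimensional. Two linearly independent elements of a two-dimensional space form a basis, finishing the proof. The only mild obstacle is cleanly justifying that every $B \in T$ preserves both $e_0 V$ and $e_1 V$; this is immediate from the definitions of $e_0$ and $e_1$ as projections onto $T$-submodules and the fact that orthogonal complements of $T$-modules (under the Hermitian inner product used here) are again $T$-modules, as recorded in the preliminary material on semisimplicity of $T$.
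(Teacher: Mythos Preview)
Your proof is correct and follows the same overall architecture as the paper: establish $e_0,e_1\in Z(T)$, check linear independence, and use Lemma~\ref{Tiso} to see $\dim Z(T)=2$. The only genuine difference is in the centrality step. The paper argues concretely: by Lemma~\ref{commute} the element $e_0$ commutes with $E_0$ and $E_0^*$, and since these generate $T$ (Lemma~\ref{genT}) it commutes with all of $T$; then $e_1=I-e_0$ is central as well. You instead use the module-theoretic principle that the projection onto a $T$-submodule along a complementary $T$-submodule commutes with every element of $T$. Your route is slightly more conceptual and avoids the explicit matrix check in Lemma~\ref{commute}; the paper's route is more hands-on and ties directly into the lemmas just proved. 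Either way the remaining two steps (independence and the dimension count via $Mat_2(\mathbb{C})\oplus\mathbb{C}$) are identical.
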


\begin{proof}
Combining Lemma \ref{genT} and Lemma \ref{commute}, we have $e_0 \in Z(T)$.
Since $e_0 + e_1 = I$, $e_1 \in Z(T)$.
By construction, $e_0$, $e_1$ are linearly independent.
By Lemma \ref{Tiso}, the dimension of $Z(T)$ is 2.
The result follows.
%
\end{proof}

\begin{lem} \label{lemofL15}
For the graph $K_r$, the subspace $e_1V$ is orthogonal to each of $E_0 V$ and $E^{*}_0 V$.
\end{lem}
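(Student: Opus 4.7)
The plan is to observe that both $E_0 V$ and $E_0^{*} V$ sit inside $e_0 V$, and then invoke the orthogonality of the decomposition $V = e_0 V \oplus e_1 V$.

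More concretely, by Lemma \ref{basee0V}(i) we have $E_0 V = \mathbb{C}\mathbf{1}$ and $E_0^{*} V = \mathbb{C}\hat{x}$. By Lemma \ref{basee0V}(ii), the vectors $\hat{x}$ and $\mathbf{1}$ both lie in $e_0 V$. Therefore
\[ E_0 V \subseteq e_0 V, \qquad E_0^{*} V \subseteq e_0 V. \]
By the definition of $e_0$ and $e_1$ as orthogonal projections onto complementary $T$-invariant subspaces (equivalently, by (\ref{e0e1})), the decomposition $V = e_0 V + e_1 V$ is an orthogonal direct sum. Hence $e_1 V$ is orthogonal to $e_0 V$, and in particular orthogonal to each of the subspaces $E_0 V$ and $E_0^{*} V$.

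There is really no obstacle here; the lemma is a direct consequence of the identification of $E_0 V$ and $E_0^{*} V$ as one-dimensional subspaces of the two-dimensional primary $T$-module $e_0 V$, together with the fact that $e_0 V$ and $e_1 V$ are orthogonal complements in $V$. All of the needed facts have already been established in Lemma \ref{basee0V} and equation (\ref{e0e1}).
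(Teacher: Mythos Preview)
Your proof is correct and follows essentially the same approach as the paper: the paper's proof simply cites (\ref{e0e1}) and Lemma~\ref{basee0V}(iii), which together say that $E_0V$ and $E_0^{*}V$ lie inside $e_0V$ and that $e_1V \perp e_0V$. You reach the same containment via parts (i) and (ii) of Lemma~\ref{basee0V} rather than (iii), but the argument is otherwise identical.
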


\begin{proof}
By (\ref{e0e1}) and Lemma \ref{basee0V}(iii).
\end{proof}

\begin{lem}  \label{dsumofV}
For the graph $K_r$,
\begin{align} \label{eqn:2}
V = E^{*}_{0}V + E_{0}V + e_{1}V \qquad \text{(direct sum)}.
\end{align}
\end{lem}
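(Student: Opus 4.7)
The plan is to assemble the decomposition from pieces we have already established, chiefly equation~(\ref{e0e1}), Lemma~\ref{basee0V}(iii), and Lemma~\ref{e1V}(i). The main task is to verify that the three summands have trivial pairwise intersections and that the total dimension matches $\dim V = r$.

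First, I would invoke equation~(\ref{e0e1}), which gives $V = e_0 V + e_1 V$ as an orthogonal direct sum. Next, I would substitute the identity $e_0 V = E_0 V + E_0^{*} V$ from Lemma~\ref{basee0V}(iii) into this expression to obtain
\begin{equation*}
V = E_0^{*} V + E_0 V + e_1 V.
\end{equation*}
This shows $V$ equals the sum; the remaining task is to confirm directness.

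To verify directness, I would count dimensions. By Lemma~\ref{basee0V}(i), $\dim(E_0 V) = \dim(E_0^{*} V) = 1$, and by Lemma~\ref{e1V}(i), $\dim(e_1 V) = r-2$. The sum is $1 + 1 + (r-2) = r = \dim V$, so any sum expression of $V$ in terms of these three subspaces is automatically direct. Alternatively, one can argue more structurally: the decomposition $V = e_0 V \oplus e_1 V$ is already direct, and within $e_0 V$ the decomposition $E_0 V + E_0^{*} V$ is direct because $\hat{x}$ and $\mathbf{1}$ are linearly independent and span $e_0 V$ (Lemma~\ref{basee0V}(ii)).

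I do not anticipate any real obstacle here; the statement is essentially a bookkeeping consequence of results already in hand. The only subtlety worth flagging is that the sum is \emph{not} orthogonal in general, since $E_0 V = \mathbb{C}\mathbf{1}$ and $E_0^{*} V = \mathbb{C}\hat{x}$ are not orthogonal (their inner product is $1$); this is why the statement says ``direct sum'' rather than ``orthogonal direct sum,'' in contrast to (\ref{e0e1}). The orthogonality of $e_1 V$ to each of $E_0 V$ and $E_0^{*} V$, recorded in Lemma~\ref{lemofL15}, is a separate finer fact not needed for the directness itself.
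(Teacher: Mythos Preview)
Your proposal is correct and follows essentially the same approach as the paper's proof, which also derives the decomposition directly from (\ref{e0e1}) and Lemma~\ref{basee0V}(iii). Your added dimension count and remark about non-orthogonality are correct elaborations but not strictly necessary.
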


\begin{proof}
By (\ref{e0e1}) and Lemma \ref{basee0V}(iii).
\end{proof}

\begin{lem} \label{e1V=}
For the graph $K_r$,   
\begin{center} 
$e_1V = E_1 V \cap E_1^{*} V$.
\end{center}
\end{lem}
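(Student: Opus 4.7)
The plan is to realize $E_1V\cap E_1^*V$ as the orthogonal complement of $E_0V+E_0^*V$, and then invoke Lemma \ref{basee0V}(iii) together with the fact that $e_0$ is an orthogonal projection.

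First I would recall from Section 2 that $E_0$ is a Hermitian idempotent (that is, $\overline{E_0}=E_0$, $E_0^t=E_0$, $E_0^2=E_0$), so $E_0$ is the orthogonal projection of $V$ onto $E_0V$. Hence $E_1=I-E_0$ is the orthogonal projection onto $(E_0V)^{\perp}$, which gives $E_1V=(E_0V)^{\perp}$. Running the same argument on the dual side yields $E_1^*V=(E_0^*V)^{\perp}$. Intersecting these two descriptions,
\begin{equation*}
E_1V\cap E_1^*V \;=\; (E_0V)^{\perp}\cap(E_0^*V)^{\perp} \;=\; \bigl(E_0V+E_0^*V\bigr)^{\perp}.
\end{equation*}

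Next I would use Lemma \ref{basee0V}(iii) to rewrite $E_0V+E_0^*V=e_0V$. At this point it remains to show $(e_0V)^{\perp}=e_1V$. By Lemma \ref{prop e0}(ii),(iii) we have $\overline{e_0}=e_0$ and $e_0^t=e_0$, and by construction $e_0^2=e_0$; thus $e_0$ is an orthogonal projection onto $e_0V$. Since $e_0+e_1=I$, the complementary idempotent $e_1$ is the orthogonal projection onto $(e_0V)^{\perp}$, so $(e_0V)^{\perp}=e_1V$. Chaining the three equalities gives $E_1V\cap E_1^*V=e_1V$, as desired.

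There is no real obstacle here; the argument is essentially bookkeeping with orthogonal projections. The only point that needs to be mentioned explicitly is why $e_0$ is an orthogonal (and not merely algebraic) projection, but this is immediate from the symmetry properties already established in Lemma \ref{prop e0}.
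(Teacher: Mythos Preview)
Your argument is correct and follows essentially the same route as the paper: both identify $E_1V\cap E_1^{*}V$ with $(E_0V+E_0^{*}V)^{\perp}$ via the orthogonal decompositions $V=E_0V\oplus E_1V$ and $V=E_0^{*}V\oplus E_1^{*}V$, and then show this equals $e_1V$. The only cosmetic difference is that the paper cites Lemmas~\ref{lemofL15} and~\ref{dsumofV} for the last step, whereas you invoke Lemma~\ref{basee0V}(iii) together with the Hermitian idempotent property of $e_0$ from Lemma~\ref{prop e0}; these are equivalent inputs (and indeed you could have simply cited the orthogonal decomposition~(\ref{e0e1}) directly in place of Lemma~\ref{prop e0}).
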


\begin{proof}
We have the orthogonal direct sums  $V = E_0 V + E_1 V$ and $V = E^{*}_0 V + E^{*}_1 V$.
Using these and Lemmas \ref{lemofL15}, \ref{dsumofV},
\begin{center} 
 $e_1V = {(E_0 V + E_0^{*} V)}^{\perp} = (E_0 V)^{\perp} \cap (E_0^{*} V)^{\perp} = E_1 V \cap E_1^{*} V$.
\end{center}
\end{proof}

\begin{lem} \label{AA*acte1V}
For the graph $K_r$,  each of $A, A^*$ acts on $e_1 V$ as $-I$.
\end{lem}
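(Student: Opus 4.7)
The plan is to leverage Lemma \ref{e1V=}, which identifies $e_1V$ as the intersection $E_1V \cap E_1^{*}V$. Once a vector $v$ is known to lie in this intersection, it is annihilated by both $E_0$ and $E_0^{*}$, and the claim reduces to a one-line computation for each of $A$ and $A^{*}$.

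More concretely, I would first rewrite $A$ and $A^{*}$ in a form where $E_0$ and $E_0^{*}$ appear explicitly. From $E_0 = r^{-1}J$ and $A = J - I$ we have $A = rE_0 - I$, and from the formula recalled just after Definition \ref{DefKr} together with $E_0^{*} + E_1^{*} = I$, we have $A^{*} = rE_0^{*} - I$. Now fix $v \in e_1V$. By Lemma \ref{e1V=}, $v \in E_1V$, so $E_0 v = E_0 E_1 v = 0$ using orthogonality of the primitive idempotents. Likewise $v \in E_1^{*}V$ forces $E_0^{*} v = 0$. Substituting into the two expressions for $A$ and $A^{*}$ gives $Av = -v$ and $A^{*}v = -v$.

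Since $v \in e_1V$ was arbitrary, each of $A$ and $A^{*}$ acts on $e_1V$ as $-I$. I don't expect any real obstacle: all the needed facts — the formulas $A = rE_0 - I$ and $A^{*} = rE_0^{*} - I$, orthogonality of primitive idempotents, and the characterization of $e_1V$ — are already in place, so the proof is essentially a two-sentence application of Lemma \ref{e1V=}.
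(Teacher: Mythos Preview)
Your proof is correct and follows essentially the same approach as the paper: both use Lemma~\ref{e1V=} to identify $e_1V = E_1V \cap E_1^{*}V$, then observe that $A$ acts as $-I$ on $E_1V$ and $A^{*}$ acts as $-I$ on $E_1^{*}V$. You unpack this via $A = rE_0 - I$ and $E_0v = 0$, whereas the paper states it directly from the spectral decomposition $A = (r-1)E_0 - E_1$, but the content is identical.
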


\begin{proof}
Since $A$ acts on $E_1 V$ as $-I$ and  $A^{*}$ acts on $E_1^{*} V$ as $-I$, each of $A, A^*$ acts on $E_1 V \cap E_1^{*} V$ as $-I$.
By Lemma \ref{e1V=}, the result follows.
\end{proof}

\begin{lem} \label{prodKr}
For the graph $K_r$, the matrix ${A}^{-1}{A}^{*-1}{A}{A}^{*}$ is diagonalizable.
Its eigenspaces are $E^{*}_{0}V, E_{0}V,  e_{1}V$. The corresponding eigenvalues are
\begin{center} $1-r$, \quad $(1-r)^{-1}$, \quad $1$.  \end{center}
\end{lem}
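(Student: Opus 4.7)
The plan is to exploit the direct sum decomposition $V = E^{*}_{0}V + E_{0}V + e_{1}V$ from Lemma \ref{dsumofV} and analyze how the group commutator $A^{-1}A^{*-1}AA^{*}$ acts on each summand separately. Since the primary module $e_{0}V = E^{*}_{0}V + E_{0}V$ is $A$- and $A^{*}$-invariant, and $e_{1}V$ is $A$- and $A^{*}$-invariant because $e_{1} \in Z(T)$, each of $A, A^{*}, A^{-1}, A^{*-1}$ preserves the decomposition $V = e_{0}V + e_{1}V$.

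For the summand $e_{1}V$, the work is immediate: by Lemma \ref{AA*acte1V} both $A$ and $A^{*}$ act as $-I$, so $A^{-1}A^{*-1}AA^{*}$ acts as $(-I)^{-1}(-I)^{-1}(-I)(-I) = I$ on $e_{1}V$. This contributes the eigenvalue $1$ with eigenspace $e_{1}V$.

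The substantive step is the action on the primary module $e_{0}V$, which is two-dimensional with basis $\hat{x}, \mathbf{1}$ by Lemma \ref{basee0V}(ii). Here I will simply invoke the matrix representations from Lemma \ref{matrepAA*} and carry out the $2 \times 2$ computation. Using $\det(A) = \det(A^{*}) = 1-r$, one inverts the given matrices, multiplies out the four factors in order, and checks that the resulting matrix is diagonal with entries $1-r$ and $(1-r)^{-1}$ along the diagonal. Because $\hat{x}$ spans $E^{*}_{0}V$ and $\mathbf{1}$ spans $E_{0}V$ (Lemma \ref{basee0V}(i)), this identifies $E^{*}_{0}V$ as the eigenspace for $1-r$ and $E_{0}V$ as the eigenspace for $(1-r)^{-1}$.

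Combining the two analyses, $V$ decomposes as a direct sum of eigenspaces of $A^{-1}A^{*-1}AA^{*}$, establishing diagonalizability and giving precisely the eigenvalue/eigenspace correspondence claimed. The only real obstacle is the $2 \times 2$ matrix multiplication: a clean computation is needed to show that the off-diagonal entries cancel, and care must be taken with the signs so that the diagonal entries come out as $1-r$ and $(1-r)^{-1}$ rather than their negatives or reciprocals.
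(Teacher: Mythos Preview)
Your proposal is correct and follows essentially the same argument as the paper: both use Lemma \ref{dsumofV} together with Lemma \ref{AA*acte1V} for the action on $e_1V$, and on the primary module both compute the $2\times 2$ group commutator in the basis $\hat{x},\mathbf{1}$ from Lemma \ref{matrepAA*} to obtain the diagonal matrix $\mathrm{diag}(1-r,\,(1-r)^{-1})$, invoking Lemma \ref{basee0V}(i) to identify the eigenspaces. The only minor addition in your write-up is the explicit remark that $e_0V$ and $e_1V$ are invariant under $A,A^{*}$ (via $e_1\in Z(T)$), which the paper leaves implicit.
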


\begin{proof}
We first show that $\hat{x}$ (resp. $\mathbf{1}$)  is an eigenvector for ${A}^{-1}{A}^{*-1}{A}{A}^{*}$ with eigenvalue $1-r$ (resp. $(1-r)^{-1}$).
Let $B$ (resp. $B^{*}$) denote the matrix representing $A$ (resp. $A^*$) with respect to the basis $\hat{x}, \mathbf{1}$ as in Lemma \ref{matrepAA*}.
By matrix multiplication,
 \[  {B}^{-1}{B}^{*-1}{B}{B}^{*} =
\begin{bmatrix}
   1-r & 0  \\
    0 & \frac{1}{1-r}
\end{bmatrix}.\]
The results follow in view of Lemma \ref{basee0V}(i) and  Lemmas \ref{dsumofV}, \ref{AA*acte1V}.
\end{proof}


\section{The Hamming graph $H(D,r)$}

\hskip 19pt In this section we give our main results, which are about the Hamming graph $H(D,r)$.

\begin{defn}
{\rm For an integer $D \geq 1$, the {\it  Hamming graph} $H(D, r)$ has vertex set the Cartesian product of $D$ copies of $X$, with two vertices adjacent whenever  they differ in precisely one coordinate.}
\end{defn}

By \cite[p. 27]{BCN} the Hamming graph $H(D, r)$ is distance-regular with diameter $D$. By \cite[p. 261]{BCN} $H(D,r)$ is $Q$-polynomial. 
Observe that for $D =1$, $H(D,r)$ is the complete graph $K_r$.


In order to distinguish between $H(D,r)$ and $K_r$, we use the following notations. 
When we talk about $H(D,r)$, we write everything in bold. For example, for $H(D,r)$  the vertex set is denoted by $\mathbb{X}$ and the standard module is denoted by  $\mathbb{V}$.
On the other hand, when we discuss $K_r$, we retain the notation that we set up earlier. 
Observe that  $\mathbb{X} = X \times X \times \cdots \times X$ (Cartesian product of $D$ terms). 
Therefore we have a tensor product $\mathbb{V} = V^{\otimes D}$.
The algebras $\mathbb{M}^*$ and $\mathbb{T}$ are with respect to the vertex $(x,x, \ldots, x)$ where $x$ is from below Definition \ref{DefKr}. 

We  now state our first main result.

\begin{thm} \label{lem8}
For  the graph $H(D,r)$ the matrix $\mathbb{A}_{D}^{-1}\mathbb{A}_{D}^{*-1}\mathbb{A}_{D}\mathbb{A}_{D}^{*}$ is diagonalizable, with eigenvalues 
\begin{center} $(1-r)^s$  \qquad $-D \leq s \leq D$. \end{center}
\end{thm}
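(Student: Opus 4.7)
The plan is to exploit the tensor product structure $\mathbb{V} = V^{\otimes D}$ coming from $\mathbb{X} = X^D$, and to show that both $\mathbb{A}_D$ and $\mathbb{A}_D^{*}$ factor as $D$-fold tensor powers of the corresponding matrices for $K_r$. Once these factorizations are in hand, the theorem reduces to Lemma \ref{prodKr} together with the standard fact that a tensor power of a diagonalizable operator is diagonalizable, with eigenvalues equal to products of the eigenvalues of the factors.

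First I would verify that $\mathbb{A}_D = A^{\otimes D}$. For $\mathbf{u}, \mathbf{v} \in \mathbb{X}$, the entry $(\mathbb{A}_D)_{\mathbf{u}\mathbf{v}}$ equals $1$ precisely when $\mathbf{u}$ and $\mathbf{v}$ disagree in every coordinate. Since $A = J-I$ on $K_r$ satisfies $(A)_{u_k v_k} = 1 \iff u_k \neq v_k$, this condition is equivalent to $\prod_k (A)_{u_k v_k} = 1$, which is $(A^{\otimes D})_{\mathbf{u}\mathbf{v}}$. For the dual side I would use the fact that the $Q$-polynomial primitive idempotents of $H(D,r)$ decompose as
\[
\mathbb{E}_s = \sum_{i_1 + \cdots + i_D = s} E_{i_1} \otimes \cdots \otimes E_{i_D},
\]
so in particular $\mathbb{E}_D = E_1^{\otimes D}$. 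Combining this with $(\mathbb{A}_D^{*})_{\mathbf{y}\mathbf{y}} = |\mathbb{X}|(\mathbb{E}_D)_{\mathbf{x}\mathbf{y}} = r^D \prod_k (E_1)_{x y_k}$ and $(A^{*})_{y_k y_k} = r (E_1)_{x y_k}$ gives $\mathbb{A}_D^{*} = A^{*\otimes D}$.

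With these factorizations established, the multiplicative behavior of tensor products yields
\[
\mathbb{A}_D^{-1}\mathbb{A}_D^{*-1}\mathbb{A}_D\mathbb{A}_D^{*} = (A^{-1}A^{*-1}AA^{*})^{\otimes D}.
\]
By Lemma \ref{prodKr}, $A^{-1}A^{*-1}AA^{*}$ is diagonalizable on $V$ with eigenvalues $1-r$, $(1-r)^{-1}$, $1$. Its $D$-fold tensor power is therefore diagonalizable, and its eigenvalues are products of $D$ factors chosen from $\{1-r,\,(1-r)^{-1},\,1\}$. A product with $a$ factors equal to $1-r$, $b$ factors equal to $(1-r)^{-1}$, and $D-a-b$ factors equal to $1$ is exactly $(1-r)^{a-b}$; setting $s = a-b$, the resulting eigenvalues are precisely $(1-r)^s$ for $-D \le s \le D$, as claimed.

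The main obstacle is the justification of $\mathbb{A}_D^{*} = A^{*\otimes D}$, since it depends on the (standard but non-trivial) tensor decomposition of the $Q$-polynomial primitive idempotents of $H(D,r)$ in terms of the primitive idempotents of $K_r$. Once that structural identity is recorded, the rest of the argument is essentially formal, being driven by the tensor-product behavior of diagonalization and by the explicit spectral description of $A^{-1}A^{*-1}AA^{*}$ already obtained for $K_r$.
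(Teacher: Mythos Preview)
Your argument is correct and rests on the same core identity the paper uses, namely $\mathbb{A}_D^{-1}\mathbb{A}_D^{*-1}\mathbb{A}_D\mathbb{A}_D^{*} = (A^{-1}A^{*-1}AA^{*})^{\otimes D}$ together with Lemma~\ref{prodKr}. The paper records this factorization in the proof of Lemma~\ref{prod act} and then reaches Theorem~\ref{lem8} indirectly, by first building the split decomposition $\mathbb{V}=\sum_{i,j}\widetilde{\mathbb{V}}_{ij}$, showing that the commutator acts on each $\widetilde{\mathbb{V}}_{ij}$ as the scalar $(1-r)^{j-i}$ (Theorem~\ref{lem13}), and only then concluding diagonalizability. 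Your route is more elementary for the bare statement: you skip the split decomposition entirely and appeal directly to the fact that a tensor power of a diagonalizable operator is diagonalizable, reading off the eigenvalues as products. What the paper's longer path buys is an explicit description of each eigenspace as $\sum_{j-i=s}\widetilde{\mathbb{V}}_{ij}$ with its dimension, which is needed for the later results on irreducible $\mathbb{T}$-modules; your argument gives the eigenvalue list cleanly but would need the same machinery added back to recover that finer information. One small point you leave implicit: each value $(1-r)^{s}$ with $-D\le s\le D$ genuinely occurs, which follows since all three eigenspaces in Lemma~\ref{prodKr} are nonzero (the assumption $r\ge 3$ ensures $\dim e_1V=r-2\ge 1$), and the values are pairwise distinct since $|1-r|\ge 2$.
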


We will prove Theorem \ref{lem8} after Theorem \ref{lem13}.

We now consider for each eigenvalue of $H(D,r)$, what is the corresponding eigenspace and its dimension?
To do this, it is convenient to bring in the split decomposition of $\mathbb{V}$ (see \cite{split}).
We now recall this decomposition.
Using $\mathbb{V} = V^{\otimes D}$ and (\ref{eqn:2}),
\begin{align} \label{eqn:st}
 \mathbb{V} = (E^{*}_{0}V+ E_{0}V+e_{1}V)^{\otimes D}.
\end{align}
Expanding (\ref{eqn:st}) we obtain 
\begin{align} \label{eqn:1}
 \mathbb{V} =   \sum V_1 \otimes V_2 \otimes \cdots \otimes V_D \quad \text{(direct sum)},
\end{align}
where the sum is over all sequences $V_1,V_2, \ldots, V_D$ of elements taken from $E_0^{*}V, E_{0}V, e_{1}V. $
For each summand in (\ref{eqn:1}) define
\begin{align} \label{alpha} \alpha = |\{j|1 \leq j \leq D, V_j = E_0^{*}V\}|, \end{align}
\begin{align} \label{beta} \beta = |\{j|1 \leq j \leq D, V_j = E_0V\}|,\end{align}
\begin{align} \label{eta} \eta = |\{j|1 \leq j \leq D, V_j = e_1V\}|.\end{align}
We call $\eta$ the {\it displacement}.\\
By construction 
\begin{align} \label{eqn:3} \alpha + \beta + \eta = D. \end{align}

\begin{defn} \label{defVeta}
{\rm For $0 \leq  \eta  \leq  D$  let $\mathbb{V}_\eta$ denote the sum of the terms in (\ref{eqn:1}) that have displacement $\eta$.}
\end{defn}

\begin{lem}  
For $0 \leq  \eta  \leq  D$, 
\begin{align} \label{sumVeta}
\mathbb{V}= \displaystyle  \sum_{\eta=0}^{D}   {\mathbb{V}}_{\eta} \quad \text{(orthogonal direct sum)}.
\end{align}
\end{lem}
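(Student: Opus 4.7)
The plan is to combine the direct-sum decomposition (\ref{eqn:1}) with the orthogonality asserted in Lemma~\ref{lemofL15} and with the multiplicativity of the Hermitean inner product on the tensor product $\mathbb{V}=V^{\otimes D}$.

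First I would note that the sum $\mathbb{V}=\sum_{\eta=0}^{D}\mathbb{V}_{\eta}$ as a direct sum is essentially built into Definition~\ref{defVeta}: by (\ref{eqn:1}), $\mathbb{V}$ is the direct sum of the subspaces $V_{1}\otimes\cdots\otimes V_{D}$ as $(V_1,\ldots,V_D)$ ranges over all sequences from $\{E^{*}_0 V,E_0 V,e_1 V\}$, and grouping these summands by the value of $\eta$ defined in (\ref{eta}) yields $\mathbb{V}=\sum_{\eta=0}^{D}\mathbb{V}_{\eta}$ as a direct sum.

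Next I would verify the orthogonality. Pick two summands $V_1\otimes\cdots\otimes V_D$ and $V'_1\otimes\cdots\otimes V'_D$ from (\ref{eqn:1}) whose displacements differ. Since the total number of coordinates equal to $e_1 V$ is different in the two sequences, there must exist some index $j$ at which exactly one of $V_j, V'_j$ equals $e_1 V$; without loss of generality $V_j=e_1 V$ and $V'_j\in\{E^{*}_0 V,E_0 V\}$. By Lemma~\ref{lemofL15} we have $\langle u_j,u'_j\rangle=0$ for any $u_j\in V_j$ and $u'_j\in V'_j$. The inner product on $\mathbb{V}=V^{\otimes D}$ satisfies
\[
\langle u_1\otimes\cdots\otimes u_D,\;u'_1\otimes\cdots\otimes u'_D\rangle=\prod_{i=1}^{D}\langle u_i,u'_i\rangle,
\]
which follows at once by identifying $\hat{y}_1\otimes\cdots\otimes\hat{y}_D$ with $\widehat{(y_1,\ldots,y_D)}$ and checking the formula on this basis. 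The vanishing of the $j$th factor therefore makes the whole pure-tensor inner product zero, and by sesquilinearity every vector of $V_1\otimes\cdots\otimes V_D$ is orthogonal to every vector of $V'_1\otimes\cdots\otimes V'_D$. Summing over all pairs of summands with differing displacements yields $\mathbb{V}_{\eta}\perp\mathbb{V}_{\eta'}$ whenever $\eta\neq\eta'$.

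There is no real obstacle in this argument; the only nontrivial ingredients are Lemma~\ref{lemofL15} (which provides the crucial orthogonality $e_1 V\perp (E_0 V+E^{*}_0 V)$) and the multiplicativity of the tensor-product inner product on pure tensors. The rest is bookkeeping in the index $\eta$.
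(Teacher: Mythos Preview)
Your proof is correct and follows essentially the same route as the paper's own proof, which simply cites (\ref{eqn:1}), Lemma~\ref{lemofL15}, and ``the construction.'' You have merely unpacked those three words into the explicit tensor-product computation that the paper leaves implicit.
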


\begin{proof}
By (\ref{eqn:1}) along with Lemma \ref{lemofL15} and the construction.
\end{proof}

\begin{defn}  \label{defVij}
{\rm For $-1 \leq i,j \leq D$ define 
\begin{center} $\mathbb{V}_{ij} = (\mathbb{E}_0^{*}\mathbb{V} + \mathbb{E}_1^{*}\mathbb{V} + \cdots + \mathbb{E}_i^{*}\mathbb{V}) \cap (\mathbb{E}_0\mathbb{V} + \mathbb{E}_1\mathbb{V} + \cdots + \mathbb{E}_j\mathbb{V})$.\end{center}}
\end{defn}
\noindent Observe that $\mathbb{V}_{ij} =0$ if $i=-1$ or $j=-1$. Also for $0 \leq i,j \leq D$,
$\mathbb{V}_{i-1,j} \subseteq \mathbb{V}_{ij}$  and  $ \mathbb{V}_{i,j-1} \subseteq \mathbb{V}_{ij}$. 
So $\mathbb{V}_{i-1,j}+\mathbb{V}_{i,j-1} \subseteq \mathbb{V}_{ij}$.

\begin{defn} \label{defVijtilde}
{\rm For $0 \leq i,j \leq D$ define 
\begin{center}{\it $\widetilde{\mathbb{V}}_{ij}$} = orthogonal complement of $ \mathbb{V}_{i-1,j}+ \mathbb{V}_{i,j-1}$ in $ \mathbb{V}_{ij}$.\end{center}}
\end{defn}
\noindent By \cite[Corollary 5.8]{split},
\begin{align} \label{sumVij}
\mathbb{V}= \displaystyle  \sum_{i=0}^{D} \sum_{j=0}^{D}   \widetilde{\mathbb{V}}_{ij} \quad \text{(direct sum)}.
\end{align}
By Lemma \ref{lemofL15} and Definitions \ref{defVij}, \ref{defVijtilde}, we have
\begin{align} \label{Vijtilde}
\widetilde{\mathbb{V}}_{ij} = \mathbb{V}_{ij} \cap \mathbb{V}_{\eta} \qquad \text{where}\; \eta = i+j-D.
\end{align}

\begin{lem} \label{lem95}
For $0 \leq i,j \leq D$,
 $\widetilde{\mathbb{V}}_{ij}$ is the sum of the terms in (\ref{eqn:1}) such that 
\begin{align} \label{abeta}
\alpha = D-i, \qquad \quad \beta =D-j, \qquad \quad \eta  = i+j - D.
\end{align}
\end{lem}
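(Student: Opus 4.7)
The plan is to let $W_{ij}$ denote the sum of those terms $V_1 \otimes \cdots \otimes V_D$ appearing in (\ref{eqn:1}) whose parameters satisfy (\ref{abeta}), then prove $W_{ij} = \widetilde{\mathbb{V}}_{ij}$ via a direct inclusion plus a global dimension count. By (\ref{Vijtilde}) the target rewrites as $\widetilde{\mathbb{V}}_{ij} = \mathbb{V}_{ij} \cap \mathbb{V}_\eta$ with $\eta = i+j-D$, which cleanly splits the containment into two conditions.

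The inclusion $W_{ij} \subseteq \mathbb{V}_\eta$ is immediate from Definition \ref{defVeta}, since every summand of $W_{ij}$ has displacement exactly $i+j-D$ by construction. For $W_{ij} \subseteq \mathbb{V}_{ij}$, fix a summand with $\alpha = D-i$, $\beta = D-j$, $\eta = i+j-D$. Using the tensor factorization $\mathbb{V} = V^{\otimes D}$ of the Hamming graph, $\mathbb{E}_k^*\mathbb{V}$ is the span of tensor products over $\{E_0^*V, E_1^*V\}$ having exactly $k$ factors equal to $E_1^*V$, so $\sum_{k=0}^i \mathbb{E}_k^*\mathbb{V}$ allows at most $i$ such factors. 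Now $E_0^*V \subseteq E_0^*V$, while $e_1V \subseteq E_1^*V$ by Lemma \ref{e1V=}, and $E_0V = \mathbb{C}\mathbf{1}$ decomposes as $\hat{x} + (\mathbf{1}-\hat{x}) \in E_0^*V \oplus E_1^*V$. Expanding the summand coordinate-by-coordinate along $V = E_0^*V \oplus E_1^*V$ therefore yields pure tensors whose $E_1^*V$-count ranges from $\eta$ (all $E_0V$ positions pick the $E_0^*V$ component) to $\eta + \beta = i$ (all pick $E_1^*V$); hence the summand lies in $\sum_{k=0}^{i}\mathbb{E}_k^*\mathbb{V}$. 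The symmetric argument using $E_0V \subseteq E_0V$, $e_1V \subseteq E_1V$ (again Lemma \ref{e1V=}), and $E_0^*V = \mathbb{C}\hat{x} \subseteq E_0V \oplus E_1V$ places the summand in $\sum_{k=0}^{j}\mathbb{E}_k\mathbb{V}$, with extreme index $\eta + \alpha = j$. Together these give $W_{ij} \subseteq \mathbb{V}_{ij}$, and combined with the displacement inclusion we conclude $W_{ij} \subseteq \widetilde{\mathbb{V}}_{ij}$.

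For equality, I will count dimensions. By Lemma \ref{e1V}(i), $\dim(e_1V) = r-2$, while $\dim(E_0V) = \dim(E_0^*V) = 1$, so each pattern realizing (\ref{abeta}) contributes dimension $(r-2)^{i+j-D}$, and the number of such patterns (choose the $D-i$ positions carrying $E_0^*V$, then among the remaining $i$ choose the $D-j$ carrying $E_0V$) is $\binom{D}{D-i}\binom{i}{D-j} = \binom{D}{i}\binom{i}{D-j}$. Reparametrizing by $(\alpha, \beta, \eta) = (D-i, D-j, i+j-D)$ gives $\sum_{i,j}\dim W_{ij} = \sum_{\alpha+\beta+\eta = D}\binom{D}{\alpha,\beta,\eta}(r-2)^{\eta} = (1 + 1 + (r-2))^D = r^D = \dim \mathbb{V}$. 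Since $\sum_{i,j}\dim\widetilde{\mathbb{V}}_{ij} = \dim\mathbb{V}$ by the direct sum (\ref{sumVij}), and the inequality $\dim W_{ij} \leq \dim\widetilde{\mathbb{V}}_{ij}$ holds termwise from the established inclusion, the totals matching forces equality $\dim W_{ij} = \dim\widetilde{\mathbb{V}}_{ij}$ for every $(i,j)$, and hence $W_{ij} = \widetilde{\mathbb{V}}_{ij}$.

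The most delicate step is the tensor bookkeeping in the second paragraph: one must verify that upon expanding along $V = E_0^*V \oplus E_1^*V$ (and dually along $V = E_0V \oplus E_1V$), the $E_1^*V$-count (resp.\ $E_1V$-count) realizes the full interval $[\eta, \eta+\beta]$ (resp.\ $[\eta, \eta+\alpha]$), with the extreme index hitting exactly $i$ (resp.\ $j$). Everything else reduces to linear algebra on tensor products and a single multinomial identity.
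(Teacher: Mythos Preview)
Your proof is correct and follows essentially the same two-step strategy as the paper: establish $W_{ij}\subseteq\widetilde{\mathbb{V}}_{ij}$ via the characterization $\widetilde{\mathbb{V}}_{ij}=\mathbb{V}_{ij}\cap\mathbb{V}_\eta$, then deduce equality from a global count over all $(i,j)$. The only differences are in execution. For the inclusion, the paper avoids your coordinate-by-coordinate expansion by simply absorbing the $e_1V$ and $E_0V$ factors into full copies of $V$, writing $u\subseteq (E_0^{*}V)^{\otimes\alpha}\otimes V^{\otimes i}=(E_0^{*}V)^{\otimes\alpha}\otimes(E_0^{*}V+E_1^{*}V)^{\otimes i}\subseteq\sum_{k=0}^{i}\mathbb{E}_k^{*}\mathbb{V}$; this sidesteps any need to track where each factor lands. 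For the equality, the paper uses the direct-sum decomposition $\mathbb{V}=\sum_{i,j}W_{ij}$ (immediate from (\ref{eqn:1})) together with (\ref{sumVij}) directly, rather than your explicit multinomial dimension count; of course your count is the same argument at the level of dimensions, and it conveniently anticipates Lemma~\ref{lem12}. One small remark on your closing paragraph: for the inclusion you only need the \emph{upper} bound $\eta+\beta=i$ on the $E_1^{*}V$-count (and dually $\eta+\alpha=j$); whether the full interval $[\eta,\eta+\beta]$ is realized is irrelevant to the argument.
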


\begin{proof}
Let $\mathbb{V}'_{ij}$ denote the sum of the terms in (\ref{eqn:1}) that satisfy  (\ref{abeta}).
We show that $\mathbb{V}'_{ij} = \widetilde{\mathbb{V}}_{ij}$.
We first show that $\mathbb{V}'_{ij} \subseteq \widetilde{\mathbb{V}}_{ij}$.
Let $u$ denote a summand in (\ref{eqn:1}) that satisfies  (\ref{abeta}).
We show that $u \subseteq \widetilde{\mathbb{V}}_{ij}$.
Without loss of generality, $u =   (E_0^{*}V) ^{\otimes \alpha} \otimes (e_1V) ^{\otimes \eta} \otimes (E_0V) ^{\otimes \beta} $.
Observe that 
$$\begin{array}{rcl}
u &\subseteq& (E_0^{*}V) ^{\otimes \alpha} \otimes V^{\otimes i} \qquad \qquad \qquad \text{since $\beta + \eta =i$} \\
&=& (E_0^{*}V) ^{\otimes \alpha} \otimes (E_0^{*}V+E_1^{*}V)^{\otimes i}\\
&\subseteq& \mathbb{E}_0^{*}\mathbb{V} + \mathbb{E}_1^{*}\mathbb{V} + \cdots + \mathbb{E}_i^{*}\mathbb{V}.
\end{array}$$
Similarly, 
$$\begin{array}{rcl}
u &\subseteq&  V^{\otimes j} \otimes (E_0V) ^{\otimes \beta}  \qquad \qquad \qquad  \text{since $\alpha + \eta =j$}\\
&=& (E_0V+E_1V)^{\otimes j}  \otimes  (E_0V) ^{\otimes \beta}\\
&\subseteq& \mathbb{E}_0\mathbb{V} + \mathbb{E}_1\mathbb{V} + \cdots + \mathbb{E}_j\mathbb{V}.
\end{array}$$
By the above comments and Definition \ref{defVij}, $u \subseteq {\mathbb{V}}_{ij}$.
By construction $u \subseteq \mathbb{V}_{\eta}$. 
Thus $u \subseteq \mathbb{V}_{ij} \cap \mathbb{V}_{\eta} = \widetilde{\mathbb{V}}_{ij}$.
We have shown 
\begin{align} \label{V'ij} \mathbb{V}'_{ij} \subseteq \widetilde{\mathbb{V}}_{ij}  \qquad \qquad 0 \leq i,j \leq D. \end{align}
By (\ref{eqn:1}) and the construction,
\begin{align} \label{sumV'ij}
\mathbb{V}= \displaystyle  \sum_{i=0}^{D} \sum_{j=0}^{D}   \mathbb{V}'_{ij} \quad \text{(direct sum)}.
\end{align}
Combining (\ref{sumVij}), (\ref{V'ij}) and (\ref{sumV'ij}), we obtain $\mathbb{V}'_{ij} = \widetilde{\mathbb{V}}_{ij}$ for $0 \leq i,j \leq D$.
The result follows.
\end{proof}

\begin{lem} \label{lemVeta}
For $0 \leq \eta \leq D$,
\begin{align} \label{sumVetatilde}
\mathbb{V}_{\eta} = \displaystyle  \sum_{\substack{0 \leq i,j \leq D\\ i+j = \eta + D}} \widetilde{\mathbb{V}}_{ij}.
\end{align}
\end{lem}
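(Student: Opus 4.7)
The strategy is to show that both sides of (\ref{sumVetatilde}) are described by the same collection of summands from the expansion (\ref{eqn:1}). Concretely, for each sequence $V_1, V_2, \ldots, V_D$ of elements taken from $\{E_0^{*}V,\,E_0V,\,e_1V\}$, let $u = V_1 \otimes V_2 \otimes \cdots \otimes V_D$ denote the corresponding summand in (\ref{eqn:1}), and let $\alpha, \beta, \eta$ be as in (\ref{alpha})--(\ref{eta}), so that $\alpha + \beta + \eta = D$ by (\ref{eqn:3}).

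First I would argue the containment $\supseteq$ in (\ref{sumVetatilde}). Fix $(i,j)$ with $0 \le i,j \le D$ and $i+j = \eta + D$. By Lemma \ref{lem95}, every summand $u$ contributing to $\widetilde{\mathbb{V}}_{ij}$ satisfies $\alpha = D-i$, $\beta = D-j$, and its displacement equals $i+j-D = \eta$; therefore $u \subseteq \mathbb{V}_\eta$ by Definition \ref{defVeta}. Summing over such $(i,j)$ gives the right-hand side of (\ref{sumVetatilde}) inside $\mathbb{V}_\eta$.

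For the reverse containment, let $u$ be any summand appearing in the description of $\mathbb{V}_\eta$ in Definition \ref{defVeta}, so that the displacement of $u$ equals $\eta$. Set $i = D - \alpha$ and $j = D - \beta$. Then $0 \le i, j \le D$ and $i+j = 2D - \alpha - \beta = D + \eta$, and the displacement $i+j-D$ equals $\eta$. By Lemma \ref{lem95}, $u$ lies in $\widetilde{\mathbb{V}}_{ij}$, which is one of the summands on the right-hand side of (\ref{sumVetatilde}). This gives the opposite containment, and hence equality.

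There is no real obstacle here; the content of the lemma is just the observation that Lemma \ref{lem95} makes the parameters $(\alpha,\beta,\eta)$ of a summand $u$ equivalent to the pair $(i,j) = (D-\alpha,\,D-\beta)$ subject to $i+j-D = \eta$. The only care needed is to note that the sum on the right is in fact direct (the pair $(i,j)$ is uniquely recovered from $(\alpha,\beta)$), which is consistent with (\ref{sumVij}).
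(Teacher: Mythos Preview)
Your argument is correct and is essentially the paper's own proof, just spelled out in more detail: both use Lemma~\ref{lem95} to identify each summand in (\ref{eqn:1}) of displacement $\eta$ with the unique $\widetilde{\mathbb{V}}_{ij}$ satisfying $i+j=\eta+D$, and then invoke Definition~\ref{defVeta}. The only cosmetic point is that you use the symbol $\eta$ both for the fixed index in the lemma and for the displacement parameter of a generic summand; this is harmless since the two coincide on the summands under consideration, but you might flag that explicitly.
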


\begin{proof}
By Lemma \ref{lem95}, the sum (\ref{sumVetatilde}) is equal to the sum of the terms in (\ref{eqn:1}) that have displacement $\eta$.
By Definition \ref{defVeta}, that sum is equal to $\mathbb{V}_{\eta}$. 
\end{proof}

\begin{lem} \label{lem12}
\vskip 2pt
For the graph $H(D,r)$, the following hold for $0 \leq i,j \leq D$.
\begin{enumerate}
\item[\rm(i)] Assume that $i+j < D$. Then $\widetilde{\mathbb{V}}_{ij} = 0$.
\item[\rm(ii)] Assume that $i+ j \geq D$. Then $\dim( \widetilde{\mathbb{V}}_{ij}) = \binom{D}{i}\binom{i}{D-j}(r-2)^{i+j-D}$.
\end{enumerate}
%
\end{lem}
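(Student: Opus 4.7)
The plan is to reduce everything to Lemma \ref{lem95}, which identifies $\widetilde{\mathbb{V}}_{ij}$ with the sum of those summands of the expansion (\ref{eqn:1}) whose parameters satisfy $\alpha = D-i$, $\beta = D-j$, and $\eta = i+j-D$.

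For part (i), observe that by construction (see (\ref{alpha})--(\ref{eta})) each of $\alpha, \beta, \eta$ is a nonnegative integer. If $i+j < D$, then the required value $\eta = i+j - D$ is negative, so no summand of (\ref{eqn:1}) meets the conditions of Lemma \ref{lem95}, and $\widetilde{\mathbb{V}}_{ij}=0$.

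For part (ii), I would proceed by dimension counting. First, the sum in (\ref{eqn:1}) is direct, so the subsum defining $\widetilde{\mathbb{V}}_{ij}$ is direct, and the dimension I seek is the sum of the dimensions of the eligible summands. Next, I count these summands: a summand is determined by a choice of which $\alpha = D-i$ coordinates carry the factor $E_0^{*}V$, which $\beta = D-j$ coordinates carry $E_0 V$, and which $\eta = i+j-D$ carry $e_1 V$. The number of such partitions of $\{1,\dots,D\}$ is the multinomial coefficient
\begin{displaymath}
\frac{D!}{(D-i)!\,(D-j)!\,(i+j-D)!}
\;=\;\binom{D}{i}\binom{i}{D-j},
\end{displaymath}
where the rewriting is an elementary identity. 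Finally, I would use Lemma \ref{basee0V}(i) (which gives $\dim E_0 V = \dim E_0^{*}V = 1$) and Lemma \ref{e1V}(i) (which gives $\dim e_1 V = r-2$) together with the multiplicativity of dimension under tensor products to conclude that each eligible summand has dimension $1^{\alpha}\cdot 1^{\beta}\cdot (r-2)^{\eta} = (r-2)^{i+j-D}$. Multiplying the count by this common dimension yields the claimed formula.

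There is no real obstacle: the argument is essentially a combinatorial bookkeeping exercise, with all the structural input already provided by Lemma \ref{lem95} and the dimension formulas for $E_0 V$, $E_0^{*}V$, $e_1 V$ from Section 3. The only point requiring care is the rewriting of the multinomial coefficient into the form $\binom{D}{i}\binom{i}{D-j}$ stated in the lemma.
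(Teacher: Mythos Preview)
Your proposal is correct and follows essentially the same route as the paper: both arguments invoke Lemma \ref{lem95} to reduce to counting summands in (\ref{eqn:1}) with prescribed $(\alpha,\beta,\eta)$, note that $\eta\ge 0$ forces $i+j\ge D$ for part (i), and for part (ii) multiply the multinomial count of such summands by the common dimension $(r-2)^{\eta}$ coming from $\dim E_0V=\dim E_0^{*}V=1$ and $\dim e_1V=r-2$. The only cosmetic difference is that the paper writes the count as $\binom{D}{\alpha}\binom{D-\alpha}{\beta}\binom{D-\alpha-\beta}{\eta}$ before simplifying, whereas you go straight to the multinomial.
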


\begin{proof}
We refer to the description of $ \widetilde{\mathbb{V}}_{ij}$ from Lemma \ref{lem95}.\\
\rm(i) Suppose that $\widetilde{\mathbb{V}}_{ij} \neq 0$.
Then there exists at least one term in  (\ref{eqn:1}) that satisfies (\ref{abeta}).
For this term, the displacement $\eta$ is non-negative by (\ref{eta}), so by (\ref{abeta}) we have $i+ j \geq D$, a contradiction.
Therefore $\widetilde{\mathbb{V}}_{ij} = 0$.\\
\rm(ii) Consider a summand in (\ref{eqn:1}) that contributes to $\widetilde{\mathbb{V}}_{ij}$.
This summand is described below (\ref{eqn:1}) and in Lemma \ref{lem95}.
In this summand, the 1-dimensional subspace $E_0^{*}V$ appears with multiplicity $\alpha$, the 1-dimensional subspace $E_0V$ appears with multiplicity $\beta$, and the $(r-2)$-dimensional subspace $e_1V$ appears with multiplicity $\eta$.
Therefore this summand has dimension $(r-2)^\eta$.
Also, the number of summands  with any given $\alpha, \beta, \eta$ is $\binom{D}{\alpha}\binom{D-\alpha}{\beta}\binom{D-\alpha-\beta}{\eta}$.
Hence
$$\begin{array}{rcl}
\dim( \widetilde{\mathbb{V}}_{ij}) &=&
\binom{D}{\alpha}\binom{D-\alpha}{\beta}\binom{D-\alpha-\beta}{\eta}(r-2)^{\eta}\\[5pt]
&=& \binom{D}{i}\binom{i}{D-j}(r-2)^{i+j-D}.
\end{array}$$
\end{proof}
%

\begin{lem} \label{prod act}
For the graph $H(D,r)$ and $0 \leq i,j \leq D$, the matrix $\mathbb{A}_{D}^{-1}\mathbb{A}_{D}^{*-1}\mathbb{A}_{D}\mathbb{A}_{D}^{*}$ acts on $\widetilde{\mathbb{V}}_{ij}$ as  $(1-r)^{j-i}I$.
\end{lem}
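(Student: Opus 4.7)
The plan is to write the commutator as a $D$-th tensor power on $\mathbb{V} = V^{\otimes D}$, and then read off the scalar by which it acts on each pure tensor summand of the decomposition (\ref{eqn:1}) using Lemma \ref{prodKr}. The payoff is that everything factors coordinate by coordinate, reducing the problem to the $K_r$ computation we have already done.

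First I would verify the tensor factorizations $\mathbb{A}_D = A^{\otimes D}$ and $\mathbb{A}_D^{*} = (A^{*})^{\otimes D}$. The first is immediate from the definition of Hamming distance: $(\mathbb{A}_D)_{yz} = 1$ iff $y_i \neq z_i$ for every $i$, which equals $\prod_{i} A_{y_i z_i}$. For the second, one uses the defining formula $(\mathbb{A}_D^{*})_{yy} = |\mathbb{X}|(\mathbb{E}_D)_{(x,\ldots,x),\,y}$ together with the identification $\mathbb{E}_D = E_1^{\otimes D}$ coming from the $Q$-polynomial structure of $H(D,r)$, and checks that the resulting diagonal agrees with $\prod_i (A^{*})_{y_i y_i}$. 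This bookkeeping step is where most care is required, and I expect it to be the main (mild) obstacle.

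Once these identities are in place, invertibility of $A$ and $A^{*}$ lets us take inverses factorwise to obtain
\[
\mathbb{A}_D^{-1}\mathbb{A}_D^{*-1}\mathbb{A}_D\mathbb{A}_D^{*} \;=\; \bigl(A^{-1}A^{*-1}A A^{*}\bigr)^{\otimes D}.
\]
By Lemma \ref{prodKr}, $A^{-1}A^{*-1}AA^{*}$ acts as the scalar $1-r$ on $E_0^{*}V$, as $(1-r)^{-1}$ on $E_0 V$, and as $1$ on $e_1 V$. Consequently, on each pure tensor summand $V_1 \otimes V_2 \otimes \cdots \otimes V_D$ appearing in (\ref{eqn:1}), the $D$-th tensor power acts as the scalar
\[
(1-r)^{\alpha}\,(1-r)^{-\beta}\,(1)^{\eta} \;=\; (1-r)^{\alpha-\beta},
\]
where $\alpha, \beta, \eta$ are the multiplicities defined in (\ref{alpha})--(\ref{eta}).

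To finish, I would invoke Lemma \ref{lem95}: the subspace $\widetilde{\mathbb{V}}_{ij}$ is precisely the sum of those summands in (\ref{eqn:1}) with $\alpha = D-i$ and $\beta = D-j$. Hence the common scalar on $\widetilde{\mathbb{V}}_{ij}$ is $(1-r)^{(D-i)-(D-j)} = (1-r)^{j-i}$, proving the claim.
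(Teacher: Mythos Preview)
Your proof is correct and follows essentially the same approach as the paper: factor the commutator as $(A^{-1}A^{*-1}AA^{*})^{\otimes D}$, apply Lemma~\ref{prodKr} to each tensor factor, and read off the scalar via the description of $\widetilde{\mathbb{V}}_{ij}$ in Lemma~\ref{lem95}. The paper simply asserts the tensor factorizations $\mathbb{A}_D = A^{\otimes D}$ and $\mathbb{A}_D^{*} = (A^{*})^{\otimes D}$ without the justification you sketch, but otherwise the arguments coincide.
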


\begin{proof}
We view $\mathbb{A}_{D} = A^{\otimes D}$ and $\mathbb{A}_{D}^{*} = (A^{*})^{\otimes D}$.
Thus $\mathbb{A}_{D}^{-1}\mathbb{A}_{D}^{*-1}\mathbb{A}_{D}\mathbb{A}_{D}^{*} = ({A}^{-1}{A}^{*-1}{A}{A}^{*})^{\otimes D}$.
We refer to the description to $\widetilde{\mathbb{V}}_{ij}$ from Lemma \ref{lem95}.
On each summand in  (\ref{eqn:1}) that satisfies (\ref{abeta}), $\mathbb{A}_{D}^{-1}\mathbb{A}_{D}^{*-1}\mathbb{A}_{D}\mathbb{A}_{D}^{*}$ acts as $(1-r)^{\alpha - \beta}$ in view of Lemma \ref{prodKr}. 
By (\ref{abeta}) we have $\alpha - \beta = j-i$.
The result follows.
\end{proof}

\begin{thm} \label{lem13}
For the graph $H(D,r)$ and for $-D \leq s \leq D$, the subspace
\begin{align} \label{sumVtilde} \displaystyle   \sum_{\substack{0 \leq i,j \leq D\\ i+j \geq D \\ j-i =s}}\widetilde{\mathbb{V}}_{ij}\end{align} 
is an eigenspace for the matrix $\mathbb{A}_{D}^{-1}\mathbb{A}_{D}^{*-1}\mathbb{A}_{D}\mathbb{A}_{D}^{*}$. 
The  dimension of this   eigenspace is
\begin{center}$ \displaystyle   \sum_{\substack{0 \leq i,j \leq D\\ i+j \geq D \\ j-i =s}}\binom{D}{i}\binom{i}{D-j}(r-2)^{i+j-D}$.\end{center}
The corresponding  eigenvalue is $(1-r)^s$.
\end{thm}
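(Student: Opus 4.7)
The plan is to assemble this theorem directly from the ingredients already established in Lemmas \ref{lem95}, \ref{lem12}, and \ref{prod act}, together with the direct-sum decomposition (\ref{sumVij}). There is no new calculation required; the main point is to verify that no eigenvector for $(1-r)^s$ is missed by the sum (\ref{sumVtilde}).

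First I would fix $s$ with $-D \leq s \leq D$, and for each pair $(i,j)$ with $0 \leq i,j \leq D$, $i+j \geq D$, and $j-i=s$, apply Lemma \ref{prod act} to conclude that $\mathbb{A}_{D}^{-1}\mathbb{A}_{D}^{*-1}\mathbb{A}_{D}\mathbb{A}_{D}^{*}$ acts on $\widetilde{\mathbb{V}}_{ij}$ as $(1-r)^{s}I$. Summing over such pairs, the subspace (\ref{sumVtilde}) is contained in the $(1-r)^s$-eigenspace of $\mathbb{A}_{D}^{-1}\mathbb{A}_{D}^{*-1}\mathbb{A}_{D}\mathbb{A}_{D}^{*}$.

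Next I would establish the reverse containment. By Lemma \ref{lem12}(i), $\widetilde{\mathbb{V}}_{ij}=0$ whenever $i+j<D$, so the direct sum (\ref{sumVij}) reduces to
\[
\mathbb{V} = \sum_{\substack{0 \leq i,j \leq D\\ i+j \geq D}} \widetilde{\mathbb{V}}_{ij} \qquad \text{(direct sum).}
\]
Regrouping by the value of $j-i$, we obtain
\[
\mathbb{V} = \sum_{s=-D}^{D} \Bigl(\sum_{\substack{0 \leq i,j \leq D\\ i+j \geq D \\ j-i =s}}\widetilde{\mathbb{V}}_{ij}\Bigr) \qquad \text{(direct sum),}
\]
and by the previous paragraph the inner sum lies in the $(1-r)^s$-eigenspace. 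Since the scalars $(1-r)^s$ are mutually distinct for $-D \leq s \leq D$ (as $r \geq 3$), eigenspaces corresponding to different $s$ meet trivially, so the eigenspace for $(1-r)^s$ must coincide with (\ref{sumVtilde}). In particular, $\mathbb{A}_{D}^{-1}\mathbb{A}_{D}^{*-1}\mathbb{A}_{D}\mathbb{A}_{D}^{*}$ is diagonalizable (recovering Theorem \ref{lem8} along the way).

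Finally, the dimension formula follows by taking dimensions termwise: the sum in (\ref{sumVtilde}) is direct because it is a subsum of the direct sum (\ref{sumVij}), and each summand has dimension $\binom{D}{i}\binom{i}{D-j}(r-2)^{i+j-D}$ by Lemma \ref{lem12}(ii). There is no real obstacle; the only point to be careful about is the bookkeeping that restricting (\ref{sumVij}) to indices with $i+j \geq D$ is justified by Lemma \ref{lem12}(i), and that the directness of the subsum is inherited from (\ref{sumVij}).
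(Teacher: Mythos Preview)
Your proposal is correct and follows essentially the same route as the paper: Lemma \ref{prod act} for the scalar action on each $\widetilde{\mathbb{V}}_{ij}$, the decomposition (\ref{sumVij}) together with Lemma \ref{lem12}(i) to cover all of $\mathbb{V}$, and Lemma \ref{lem12}(ii) for the dimension. Your proof is in fact more complete than the paper's terse version, since you make explicit the distinctness of the scalars $(1-r)^s$ for $-D \leq s \leq D$ (using $r \geq 3$) to justify that (\ref{sumVtilde}) is the full eigenspace and not merely contained in it.
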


\begin{proof}
Applying Lemma \ref{prod act} and using (\ref{sumVij}), the sum (\ref{sumVtilde}) is an eigenspace for $\mathbb{A}_{D}^{-1}\mathbb{A}_{D}^{*-1}\mathbb{A}_{D}\mathbb{A}_{D}^{*}$ with eigenvalue  $(1-r)^s$.
Its dimension  is obtained using Lemma \ref{lem12}.
\end{proof}

\noindent {\it Proof of Theorem \ref{lem8}}.
Combine (\ref{sumVij}), Lemma \ref{lem12}(i) and Theorem \ref{lem13}. \qquad \qquad \qquad$\square$\\


Recall the subconstituent algebra $\mathbb{T}$ for $H(D,r)$.
We now discuss the  irreducible $\mathbb{T}$-modules.
By \cite[p. 195]{3paper}, every  irreducible $\mathbb{T}$-module is thin.

\begin{lem} {\rm  \cite[p. 202]{3paper} } \label{lemr=t}
For the graph $H(D,r)$, let ${W}$ denote an irreducible $\mathbb{T}$-module. 
Then the endpoint of $W$ is equal to the dual endpoint of $W$.
\end{lem}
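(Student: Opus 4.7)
My plan is to exploit the formal self-duality of the Hamming scheme via a tensor-product involution on $\mathbb{V}$, and then close the argument using the classification of irreducible $\mathbb{T}$-modules of $H(D,r)$.

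First, I would construct an involution $\Phi$ on the standard module $V$ of $K_r$ that interchanges the roles of $A$ and $A^{*}$. On the primary $T$-module $e_{0}V$, using the basis $\hat{x},\mathbf{1}$ from Lemma \ref{basee0V}, I set $\Phi(\hat{x})=r^{-1/2}\mathbf{1}$ and $\Phi(\mathbf{1})=r^{1/2}\hat{x}$; on $e_{1}V$, where each of $A,A^{*}$ acts as $-I$ by Lemma \ref{AA*acte1V} and $E_{0},E_{0}^{*}$ both vanish by Lemma \ref{basee0V}, I let $\Phi$ act as the identity. A short computation with the matrix representations in Lemma \ref{matrepAA*} shows $\Phi^{2}=I$, $\Phi A\Phi^{-1}=A^{*}$, and $\Phi E_{0}\Phi^{-1}=E_{0}^{*}$, together with their symmetric counterparts.

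Next, I set $\Psi:=\Phi^{\otimes D}$ on $\mathbb{V}=V^{\otimes D}$. Writing $\mathbb{A}=\sum_{k=1}^{D}I^{\otimes(k-1)}\otimes A\otimes I^{\otimes(D-k)}$ and $\mathbb{E}_{j}=\sum_{|S|=j}\bigotimes_{k=1}^{D}G_{k}^{(S)}$ with $G_{k}^{(S)}=E_{1}$ if $k\in S$ and $G_{k}^{(S)}=E_{0}$ otherwise (and analogously for $\mathbb{A}^{*}$ and $\mathbb{E}_{j}^{*}$), one checks that $\Psi\mathbb{A}\Psi^{-1}=\mathbb{A}^{*}$ and $\Psi\mathbb{E}_{j}\Psi^{-1}=\mathbb{E}_{j}^{*}$ for $0\le j\le D$. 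Thus conjugation by $\Psi$ is an involutive algebra automorphism of $\mathbb{T}$ interchanging $\mathbb{M}$ with $\mathbb{M}^{*}$. For the given irreducible $\mathbb{T}$-module $W$ with endpoint $\rho$ and dual endpoint $\tau$, the subspace $W':=\Psi W$ is again an irreducible $\mathbb{T}$-module, and the identity $\mathbb{E}_{i}^{*}W'=\Psi(\mathbb{E}_{i}W)$ shows that $W'$ has endpoint $\tau$ and dual endpoint $\rho$. Hence $W\mapsto\Psi W$ pairs up irreducible $\mathbb{T}$-submodules of $\mathbb{V}$ by swapping endpoint with dual endpoint.

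To conclude that $\rho=\tau$ for each individual $W$, I would invoke the description of the irreducible $\mathbb{T}$-modules of $H(D,r)$ from \cite{3paper}: every such module is thin, and its isomorphism class is determined by its endpoint together with its diameter. A multiplicity count within the $\mathbb{T}$-invariant tensor summands $V_{\epsilon_{1}}\otimes\cdots\otimes V_{\epsilon_{D}}$ of $\mathbb{V}$ then forces the $\Psi$-pairing to carry each isomorphism class to itself, which is precisely the statement $\rho=\tau$. The main obstacle will be this multiplicity step: the $\Psi$-involution alone only establishes that the multiset of pairs $(\rho,\tau)$ is symmetric under swapping, and eliminating genuinely asymmetric pairs requires a recursive analysis of the subalgebra $\langle \sum_{k}A|_{V_{0}}^{(k)},\sum_{k}A^{*}|_{V_{0}}^{(k)}\rangle$ acting on $V_{0}^{\otimes(D-\eta)}$, ultimately reducing to the base case $K_r$ where the claim holds by inspection of the two irreducible $T$-modules.
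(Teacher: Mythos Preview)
The paper gives no proof of this lemma at all: it is stated with a bare citation to \cite[p.~202]{3paper} and then used. So there is nothing in the paper to compare your argument against; the relevant comparison is with the source \cite{3paper}, where the result is obtained as part of the explicit classification of the irreducible $\mathbb{T}$-modules for $H(D,r)$.

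Your construction of $\Phi$ and $\Psi=\Phi^{\otimes D}$ is correct and is the standard way to realise the formal self-duality of the Hamming scheme; it does yield an involutive automorphism of $\mathbb{T}$ swapping $\mathbb{E}_i\leftrightarrow\mathbb{E}_i^{*}$, and hence $\Psi W$ has endpoint $\tau$ and dual endpoint $\rho$. The problem is exactly the one you flag yourself: this only shows that the multiset of pairs $(\rho,\tau)$ occurring in $\mathbb{V}$ is stable under the swap, not that $\rho=\tau$ for each $W$. Your proposed closure is circular. You want to invoke from \cite{3paper} that the isomorphism class of a thin irreducible $\mathbb{T}$-module is determined by its endpoint and diameter; but once you grant that, the dual endpoint is a well-defined function $\tau=f(\rho,d)$, and all you can extract from $\Psi$ is that $f(\cdot,d)$ is an involution, which does not force fixed points. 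To rule out genuine $2$-cycles you would need exactly the piece of the classification in \cite{3paper} that already says $\tau=\rho$, so you are citing the lemma to prove itself.

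The alternative ``recursive reduction to $K_r$'' is not a reduction as written. On $(e_0V)^{\otimes m}$ the operators $\sum_k A|_{e_0V}^{(k)}$ and $\sum_k A^{*}|_{e_0V}^{(k)}$ do not act as a single copy of $K_r$; decomposing $(e_0V)^{\otimes m}$ under the algebra they generate is a genuine Clebsch--Gordan computation (essentially the $\mathfrak{sl}_2$ decomposition of $(\mathbb{C}^2)^{\otimes m}$), and it is precisely this computation, carried out in \cite{3paper}, that yields both the classification and the equality $\rho=\tau$. If you want an argument independent of \cite{3paper}, you must actually perform that decomposition and verify directly that on each irreducible summand the highest $\mathbb{E}_i^{*}$-weight and the highest $\mathbb{E}_i$-weight coincide; your outline stops short of this.
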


In the next lemma we refer to Definition \ref{defVeta}.

\begin{lem} \label{lem34}
For $0 \leq  \eta  \leq  D$,
the  subspace $\mathbb{V}_\eta$ is spanned by the irreducible $\mathbb{T}$-modules with displacement $\eta$.
\end{lem}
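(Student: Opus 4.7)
My plan is to establish the two inclusions between $\mathbb{V}_\eta$ and the span of the irreducible $\mathbb{T}$-modules of displacement $\eta$, and then conclude by comparing two direct sum decompositions of $\mathbb{V}$. For the first inclusion, let $W$ denote an irreducible $\mathbb{T}$-module with endpoint $\rho$, dual endpoint $\tau$, and diameter $d$, so its displacement equals $\eta = \rho + \tau + d - D$. It is immediate from the definitions of endpoint, dual endpoint, and diameter together with Definition \ref{defVij} that $W \subseteq \mathbb{V}_{\rho+d,\,\tau+d}$. By the comment just before Lemma \ref{lemr=t}, every irreducible $\mathbb{T}$-module for $H(D,r)$ is thin, and Lemma \ref{lemr=t} gives $\rho = \tau$. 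The key structural input I would invoke from \cite{split} is that for a thin irreducible $\mathbb{T}$-module, the split decomposition localizes $W$ on a single antidiagonal:
$$W \;=\; \bigoplus_{t=0}^{d}\bigl(W \cap \widetilde{\mathbb{V}}_{\rho+t,\,\tau+d-t}\bigr).$$
Consequently $W \subseteq \sum_{i+j \,=\, \rho+\tau+d}\widetilde{\mathbb{V}}_{ij}$, and since $\rho + \tau + d = \eta + D$, Lemma \ref{lemVeta} yields $W \subseteq \mathbb{V}_\eta$.

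For the reverse inclusion, I would decompose $\mathbb{V}$ as an orthogonal direct sum of irreducible $\mathbb{T}$-modules. By the first inclusion, each summand $W$ lies in $\mathbb{V}_{\eta(W)}$, so grouping the summands by displacement gives a decomposition of $\mathbb{V}$ that refines the orthogonal direct sum (\ref{sumVeta}); dimension counting then forces the span of the irreducible modules of displacement $\eta$ to equal all of $\mathbb{V}_\eta$.

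The main obstacle is the structural claim used in the first step, namely that a thin irreducible $\mathbb{T}$-module is supported on the single antidiagonal $i+j = \rho+\tau+d$ of the split decomposition. This is the heart of the argument and would be cited from the split decomposition theory of \cite{split} rather than reproved here; the rest of the proof is essentially bookkeeping with Definitions \ref{defVeta}, \ref{defVij}, \ref{defVijtilde} and Lemma \ref{lemVeta}.
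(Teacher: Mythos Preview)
Your proposal is correct and follows essentially the same route as the paper. Both arguments reduce to the identity $\mathbb{V}_\eta = \sum_{i+j=\eta+D}\widetilde{\mathbb{V}}_{ij}$ from Lemma~\ref{lemVeta} together with the structural result from \cite{split} relating irreducible $\mathbb{T}$-modules of a given displacement to that same antidiagonal of the split decomposition; the paper simply cites \cite[Theorem~6.2(i)]{split} for the equality $\mathbb{V}'_\eta = \sum_{i+j=\eta+D}\widetilde{\mathbb{V}}_{ij}$ in one stroke, whereas you unpack it into the module-level antidiagonal decomposition plus a dimension comparison against (\ref{sumVeta}).
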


\begin{proof}
Let  $\mathbb{V}'_{\eta}$ denote the span of the irreducible $\mathbb{T}$-modules with displacement $\eta$.
We show $\mathbb{V}_{\eta} = \mathbb{V}'_{\eta}$.
By \cite[Theorem 6.2(i)]{split}, 
\begin{align} 
 \mathbb{V}'_{\eta} = \displaystyle  \sum_{\substack{0 \leq i,j \leq D\\ i+j = \eta + D}} \widetilde{\mathbb{V}}_{ij}.
\end{align}
By Lemma \ref{lemVeta}, $\mathbb{V}_\eta = \mathbb{V}'_{\eta}$.
\end{proof}

Note that $\mathbb{A}_{D}^{-1}\mathbb{A}_{D}^{*-1}\mathbb{A}_{D}\mathbb{A}_{D}^{*} \in \mathbb{T}$. 
Next, we describe the  action of $\mathbb{A}_{D}^{-1}\mathbb{A}_{D}^{*-1}\mathbb{A}_{D}\mathbb{A}_{D}^{*}$ on any irreducible $\mathbb{T}$-module.
We introduce some notation.

\begin{defn} \label{defFs}
{\rm For $-D \leq s \leq D$, let $\mathbb{F}_s \in Mat_{\mathbb{X}}(\mathbb{C}) $ denote the projection onto the eigenspace for $\mathbb{A}_{D}^{-1}\mathbb{A}_{D}^{*-1}\mathbb{A}_{D}\mathbb{A}_{D}^{*}$ associated with the eigenvalue  $(1-r)^s$.}
\end{defn}

We refer to Definition \ref{defFs}.
By linear algebra, $\mathbb{F}_s$ is a polynomial in $\mathbb{A}_{D}^{-1}\mathbb{A}_{D}^{*-1}\mathbb{A}_{D}\mathbb{A}_{D}^{*}$ and is therefore contained in $\mathbb{T}$. \\
Moreover,
$$\begin{array}{rcl}
\mathbb{F}_s \mathbb{F}_t &=&  \delta_{st} \mathbb{F}_s\qquad \quad (-D \leq s,t  \leq D),\\
\displaystyle  \sum_{s=-D}^{D} \mathbb{F}_s &=&  I, \\ [15pt]
\mathbb{A}_{D}^{-1}\mathbb{A}_{D}^{*-1}\mathbb{A}_{D}\mathbb{A}_{D}^{*} &=& \displaystyle  \sum_{s=-D}^{D} (1-r)^s \mathbb{F}_s.
\end{array}$$
Write $\mathbb{V}$ as an orthogonal direct sum of irreducible $\mathbb{T}$-modules :  $\mathbb{V} =   \sum_{{W}}{W}$.\\
For $-D \leq s \leq D$,
\begin{displaymath}
\mathbb{F}_s\mathbb{V} =   \sum_{{W}}\mathbb{F}_{s}{W}.
\end{displaymath}
In the above sum, each nonzero summand $\mathbb{F}_{s}{W}$ is the eigenspace for the action of $\mathbb{A}_{D}^{-1}\mathbb{A}_{D}^{*-1}\mathbb{A}_{D}\mathbb{A}_{D}^{*}$ on ${W}$ associated with the eigenvalue $(1-r)^s$.\\
 Let ${W}$ denote an irreducible $\mathbb{T}$-module. 
Then $W$ is the direct sum of the nonzero terms among $\{\mathbb{F}_{s}{W}\}_{s=-D}^{D}$.

\begin {thm} \label{thm36}
For the graph $H(D,r)$, let ${W}$ denote an irreducible $\mathbb{T}$-module with diameter $d$. 
Then for $-D \leq s \leq D$ the following are equivalent:
\begin{enumerate}
\item[\rm(i)] $\mathbb{F}_{s}{W} \neq 0$;
\item[\rm(ii)] $-d \leq s \leq d$ and $d-s$ is even.
\end{enumerate}
Now assume $\rm{(i)}$, $\rm{(ii)}$ hold. Then $\dim (\mathbb{F}_{s}{W}) = 1$.
\end{thm}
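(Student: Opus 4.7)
The plan is to compute the action of $\mathbb{A}_D^{-1}\mathbb{A}_D^{*-1}\mathbb{A}_D \mathbb{A}_D^*$ on $W$ by intersecting $W$ with the split-decomposition pieces $\widetilde{\mathbb{V}}_{ij}$ and then applying Lemma \ref{prod act}.

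First I would invoke the split decomposition for a thin irreducible $\mathbb{T}$-module. Since every irreducible $\mathbb{T}$-module for $H(D,r)$ is thin, the theory of the split decomposition from \cite{split} (the same source already used in the proof of Lemma \ref{lem34}) gives that $W \cap \widetilde{\mathbb{V}}_{ij}$ is one-dimensional precisely when $(i,j) = (\rho+k,\,\tau+d-k)$ for some $0 \leq k \leq d$, where $\rho,\tau,d$ denote the endpoint, dual endpoint, and diameter of $W$; otherwise the intersection is zero. Consequently
\[
W \;=\; \bigoplus_{k=0}^{d} \bigl( W \cap \widetilde{\mathbb{V}}_{\rho+k,\,\tau+d-k}\bigr),
\]
with each summand $1$-dimensional.

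Next I would apply Lemma \ref{prod act}, which says that $\mathbb{A}_D^{-1}\mathbb{A}_D^{*-1}\mathbb{A}_D \mathbb{A}_D^*$ acts on $W \cap \widetilde{\mathbb{V}}_{\rho+k,\,\tau+d-k}$ as the scalar $(1-r)^{(\tau+d-k)-(\rho+k)} = (1-r)^{\tau-\rho+d-2k}$. By Lemma \ref{lemr=t} we have $\rho = \tau$, so this scalar collapses to $(1-r)^{d-2k}$. As $k$ runs from $0$ to $d$, the exponent $s = d-2k$ takes exactly the values $\{d, d-2, \ldots, -d+2, -d\}$, which is precisely the set $\{s : -d \leq s \leq d,\ d-s \text{ even}\}$. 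For each such $s$ the corresponding eigenspace of $\mathbb{A}_D^{-1}\mathbb{A}_D^{*-1}\mathbb{A}_D \mathbb{A}_D^*$ on $W$ equals $W \cap \widetilde{\mathbb{V}}_{\rho+k,\,\tau+d-k}$ with $k = (d-s)/2$, hence has dimension $1$. For $s$ outside this set no summand contributes, so $\mathbb{F}_s W = 0$. This establishes both the equivalence (i) $\Leftrightarrow$ (ii) and the dimension statement $\dim(\mathbb{F}_s W) = 1$.

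The main obstacle is the first step: identifying exactly which intersections $W \cap \widetilde{\mathbb{V}}_{ij}$ are nonzero and showing that each is one-dimensional. This requires a slightly sharper form of the result from \cite[Theorem 6.2]{split} than the version quoted in the proof of Lemma \ref{lem34}, namely the statement that for each thin irreducible $\mathbb{T}$-module $W$ the split decomposition refines $W$ into $d+1$ one-dimensional pieces indexed by the pairs $(\rho+k,\,\tau+d-k)$. Once that structural fact is cited, the remainder is an immediate substitution into Lemma \ref{prod act} combined with Lemma \ref{lemr=t}.
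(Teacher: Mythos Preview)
Your proposal is correct and rests on the same ingredients as the paper's proof: the split decomposition of $\mathbb{V}$, Lemma~\ref{prod act}, Lemma~\ref{lemr=t}, and the structural results from \cite{split} describing how a thin irreducible $\mathbb{T}$-module sits inside the $\widetilde{\mathbb{V}}_{ij}$. The difference is organizational rather than mathematical. You front-load a single strong citation---that $W$ decomposes as $\bigoplus_{k=0}^{d}(W\cap\widetilde{\mathbb{V}}_{\rho+k,\tau+d-k})$ with one-dimensional summands---and then read off everything at once from Lemma~\ref{prod act}. The paper instead argues the two directions separately: for (i)$\Rightarrow$(ii) it fixes the displacement $\eta$, locates a pair $(i,j)$ with $i+j=\eta+D$ and $j-i=s$ for which $W$ is not orthogonal to $\widetilde{\mathbb{V}}_{ij}$, and extracts the parity and range constraints by linear arithmetic in $\rho,\tau,d,i,j$; for (ii)$\Rightarrow$(i) it runs a pigeonhole count ($|S|=d+1=\dim W$, each nonzero $\mathbb{F}_sW$ is one-dimensional by \cite[Lemma~3.5]{split} and \cite[Lemma~3.8]{lin}, and $\mathbb{F}_sW=0$ for $s\notin S$). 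Your route is cleaner but demands the sharper statement about $W\cap\widetilde{\mathbb{V}}_{ij}$ up front, exactly as you flag; the paper's route spreads the external input over two milder citations plus the counting trick.
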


\begin{proof}
$\rm{(i) \Rightarrow (ii)}$ Let $\eta$ be the displacement of $W$.
By Lemma \ref{lem34}, $W \subseteq \mathbb{V}_\eta$.
We have $0 \neq \mathbb{F}_{s}{W} \subseteq W \subseteq \mathbb{V}_\eta$.
By the definition of displacement, 
\begin{align} \label{defeta}
\eta = \rho + \tau + d - D,
\end{align} 
where $\rho, \tau$ denote the endpoint and dual endpoint of $W$, respectively.
By Lemma \ref{lemr=t}, 
\begin{align} \label{r=t}
\rho = \tau.
\end{align} 
By Lemma \ref{lemVeta} and Theorem \ref{lem13}, there exist $0 \leq i,j \leq D$ such that 
 \begin{align} 
i+j &= \eta + D, \label{eq1}\\
j-i &=  s \label{eq2},
\end{align}
and $W$ is not orthogonal to $\widetilde{\mathbb{V}}_{ij}$ .
From (\ref{defeta}) minus (\ref{r=t}) plus (\ref{eq1}) minus (\ref{eq2}), we get
 \begin{align} \label{eqd-s}
d-s =2(i- \rho).
\end{align}
Therefore $d-s$ is even.
From (\ref{defeta}) plus (\ref{r=t}) plus (\ref{eq1}) plus (\ref{eq2}), we get
 \begin{align}  \label{eqd+s}
d+s =2(j- \tau).
\end{align}
By Definitions \ref{defVij}, \ref{defVijtilde} and the statement below (\ref{eq2}), we have $i \geq \rho$ and $j \geq \tau$.
By this and (\ref{eqd-s}), (\ref{eqd+s})  we obtain  $-d \leq s \leq d$. \\
$\rm{(ii) \Rightarrow (i)}$ Define the set $S=\{d, d-2, d-4, \ldots, -d\}$. Observe that $|S|= d+1$.
The $\mathbb{T}$-module $W$  is thin with diameter $d$, so it has dimension $d+1$.
Note that $W$ is the direct sum of the nonzero subspaces among $\{\mathbb{F}_{s}{W}\}_{s=-D}^{D}$.
These nonzero subspaces have dimension $1$ by \cite[Lemma 3.5]{split} and \cite[Lemma 3.8]{lin}.
By the earlier part of this proof, $\mathbb{F}_{s}{W} = 0$ unless $s \in S$.
It follows that  $\mathbb{F}_{s}{W} \neq 0$ for all $s \in S$.


\noindent Next assume that  \rm(i), \rm(ii) hold. 
We mentioned in the proof of $\rm{(ii) \Rightarrow (i)}$ that $\dim (\mathbb{F}_{s}{W}) = 1$.
\end{proof}

\begin {cor}
For the graph $H(D,r)$, let ${W}$ denote an irreducible $\mathbb{T}$-module with diameter $d$. 
Then the action of $\mathbb{A}_{D}^{-1}\mathbb{A}_{D}^{*-1}\mathbb{A}_{D}\mathbb{A}_{D}^{*}$ on ${W}$ is diagonalizable with eigenvalues
\begin{center}
 $(1-r)^s$ \qquad $-d \leq s \leq d$, \quad $d-s$ is even.
\end{center}
The corresponding eigenspaces all have dimension $1$.
\end{cor}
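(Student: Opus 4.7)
The plan is to deduce this corollary directly from Theorem \ref{thm36} together with the global spectral decomposition of $\mathbb{A}_{D}^{-1}\mathbb{A}_{D}^{*-1}\mathbb{A}_{D}\mathbb{A}_{D}^{*}$ recorded just before Theorem \ref{thm36}. The three ingredients I need are: (a) the matrix $\mathbb{A}_{D}^{-1}\mathbb{A}_{D}^{*-1}\mathbb{A}_{D}\mathbb{A}_{D}^{*}$ lies in $\mathbb{T}$, so it preserves $W$; (b) $W$ is thin with diameter $d$, so $\dim W = d+1$; (c) the projections $\{\mathbb{F}_s\}_{s=-D}^{D}$ already sit in $\mathbb{T}$ and are mutually orthogonal idempotents summing to $I$, so $W = \sum_{s=-D}^{D} \mathbb{F}_s W$ as a direct sum.

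First I would observe that since $\mathbb{F}_s \in \mathbb{T}$, each subspace $\mathbb{F}_s W$ is contained in $W$, and since $\mathbb{F}_s \mathbb{V}$ is by definition the $(1-r)^s$-eigenspace of $\mathbb{A}_{D}^{-1}\mathbb{A}_{D}^{*-1}\mathbb{A}_{D}\mathbb{A}_{D}^{*}$, every nonzero $\mathbb{F}_s W$ is automatically an eigenspace of the restriction to $W$ with eigenvalue $(1-r)^s$. Next I would invoke Theorem \ref{thm36} to identify exactly which $\mathbb{F}_s W$ are nonzero: the set of admissible $s$ is $S = \{d, d-2, \ldots, -d\}$, and for each such $s$ we have $\dim(\mathbb{F}_s W) = 1$.

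To conclude diagonalizability I would use a dimension count: $|S| = d+1 = \dim W$, so the sum $\sum_{s \in S} \mathbb{F}_s W$ already exhausts $W$, and $W$ decomposes as an internal direct sum of one-dimensional eigenspaces of $\mathbb{A}_{D}^{-1}\mathbb{A}_{D}^{*-1}\mathbb{A}_{D}\mathbb{A}_{D}^{*}$. This yields simultaneously the diagonalizability, the exact list of eigenvalues, and the statement that each eigenspace has dimension $1$.

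I do not expect any genuine obstacle here: all the real work has been done in Theorem \ref{thm36}, and the corollary is essentially a reformulation in terms of the restricted action. The only thing one has to be careful about is to articulate that the $\mathbb{F}_s$ lie in $\mathbb{T}$ (so that $\mathbb{F}_s W \subseteq W$), and that $\dim W = d+1$ uses thinness (from \cite[p. 195]{3paper} as cited in the paper), together with the standard fact that for a thin irreducible $\mathbb{T}$-module the subspaces $\mathbb{E}_i^{*} W$ corresponding to the $d+1$ indices in $\{i : \mathbb{E}_i^{*} W \neq 0\}$ each have dimension $1$.
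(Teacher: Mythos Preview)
Your proposal is correct and follows exactly the paper's approach: the paper's proof is the single line ``By Theorem \ref{thm36} and the comment above it,'' and your argument simply unpacks that comment (the $\mathbb{F}_s$ lie in $\mathbb{T}$, are orthogonal idempotents summing to $I$, and $W$ is the direct sum of the nonzero $\mathbb{F}_s W$) together with the content of Theorem \ref{thm36}.
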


\begin{proof}
By Theorem \ref{thm36} and the comment above it.
\end{proof}

\section{Acknowledgement}
The author would like to thank Professor Paul Terwilliger for many valuable ideas and suggestions. This paper was written while the author was an Honorary Fellow at the University of Wisconsin-Madison (January 2017- January 2018) supported by the Development and Promotion of Science and Technology Talents (DPST) Project, Thailand.

\end{document}